\newcommand\e{\mathrm e} 
\newcommand{\N}{\mathbb{N}}
\newcommand{\R}{{\mathbb{R}}}
\newcommand{\C}{{\mathbb{C}}}
\newcommand{\Z}{{\mathbb{Z}}}
\newcommand{\dd}{{{\rm d}}}
\newcommand{\ii}{{\rm i}}
\newcommand{\ie}{{\emph{i.e.}}}
\newcommand{\eg}{{\emph{e.g.}}}
\newcommand{\ov}{\overline}
\newcommand\ds{\displaystyle}
\newcommand{\eps}{\varepsilon}
\newcommand{\spd}{\sigma_{\rm disc}}
\newcommand{\se}[1]{\sigma_{\rm e#1}}
\newcommand{\spp}{\sigma_{\rm p}}
\newcommand{\Ran}{{\operatorname{Ran}}}
\renewcommand{\Re}{\operatorname{Re}}
\renewcommand{\Im}{\operatorname{Im}}
\newcommand{\dist}{\operatorname{dist}}
\newcommand{\sgn}{\operatorname{sgn}}
\newcommand{\Tr}{\operatorname{Tr}}
\newcommand{\BigO}{\mathcal{O}}
\newcommand{\diag}{\operatorname{diag}}
\theoremstyle{plain}
\newtheorem{theorem}{Theorem}[section]
\newtheorem{lemma}[theorem]{Lemma}
\newtheorem{corollary}[theorem]{Corollary}
\theoremstyle{definition}
\newtheorem{example}[theorem]{Example}
\newtheorem{remark}[theorem]{Remark}
\newcommand\cA{\mathcal A}
\newcommand\cB{\mathcal B}
\newcommand\cL{\mathcal L}
\newcommand\cM{\mathcal M}
\newcommand\cN{\mathcal N}
\newcommand\cQ{\mathcal Q}
\newcommand\cR{\mathcal R}
\numberwithin{equation}{section}
\newcommand{\pd}{\textcolor{black}}
\newcommand{\tn}[1]{{\left\vert\kern-0.25ex\left\vert\kern-0.25ex\left\vert #1 
		\right\vert\kern-0.25ex\right\vert\kern-0.25ex\right\vert}}
\begin{document}

\title[Eigenvalues of non-self-adjoint Dirac operators]{Eigenvalues of one-dimensional non-self-adjoint Dirac operators and applications}

\author{Jean-Claude Cuenin}
\address[Jean-Claude Cuenin]{
Mathematisches Institut, Universit\"at M\"unchen,
Theresienstr.~39, D-80333 M\"unchen}
\email{cuenin@math.lmu.de}

\author{Petr Siegl}
\address[Petr Siegl]{
Mathematical Institute, 
University of Bern,
Alpeneggstr.\ 22,
3012 Bern, Switzerland
\& On leave from Nuclear Physics Institute CAS, 25068 \v Re\v z, Czechia}
\email{petr.siegl@math.unibe.ch}

\subjclass[2010]{34L40,	34L15, 35P15, 81Q12}

\keywords{non-self-adjoint Dirac operator, complex potential, Birman-Schwinger principle, Lieb-Thirring inequalities, damped wave equation, armchair graphene nanoribbons}

\date{25th April 2017}

\begin{abstract}
We analyze eigenvalues emerging from thresholds of the essential spectrum of one-dimensional Dirac operators perturbed by complex and non-symmetric potentials. In the general non-self-adjoint setting we establish the existence and asymptotics of weakly coupled eigenvalues and Lieb-Thirring inequalities. As physical applications we investigate the damped wave equation and armchair graphene nanoribbons.  
\end{abstract}

\thanks{The research of P.S. is supported by the \emph{Swiss National Science Foundation}, SNF Ambizione grant No. PZ00P2\_154786.
}

\maketitle


\section{Introduction}

Dirac operators attracted considerable attention in recent years, in particular in the context of non-self-adjoint spectral theory \cite{Cuenin-2014-79, Cuenin-2017-272, Cuenin-2014-15, Djakov-2012-263, Elton-2013, Lunyov-2016-441, Savchuk-2014-96}, nonlinear Schr\"odinger equations \eg~\cite{Cascaval-2004-93,Kappeler-2014-359} or as an effective model for graphene \cite{Benguria-2017, Cuenin-2014-55, Freitas-2014-26, Jakubsky-2013-331}. In this paper we analyze eigenvalues emerging from the thresholds of the essential spectrum of the one-dimensional Dirac operator in $L^2(\R)$ perturbed by a general matrix-valued and non-symmetric potential $V$ preserving the essential spectrum. 

Our main results include the existence and asymptotics of weakly coupled eigenvalues for the one dimensional Dirac operator (Theorem~\ref{thm:1D.weak}) and Lieb-Thirring type inequalities (Theorem~\ref{thm.LT}) in the massive as well as the massless case. These results complement the eigenvalue estimates in \cite{Cuenin-2014-15} and also show that the latter are optimal in the weak coupling regime, see Remark~\ref{rem:wc}. 

As physical applications, we investigate the damped wave equation in $L^2(\R)$ and to a two-dimensional model of charge carriers in graphene nanoribbons (or waveguides) with so-called armchair boundary conditions. We emphasize here the inherent  non-self-adjoint nature of the former caused by the presence of damping. Moreover, our eigenvalue estimates may be converted to resonance estimates via the well-known method of complex scaling, as in \cite{Cuenin-2014-15}.

The application for the damped wave equation (Theorem~\ref{thm:DWE}) demonstrates a natural effect from the physical point of view: The integrable damping $\eps a_1(x)$ cannot affect the essential spectrum; however, for any $\eps>0$, it gives rise to a pair of complex conjugated eigenvalues having the tendency to meet at the real axis. The interpretation of the results for the graphene armchair waveguides is more complicated due to the $4\times 4$ matrix structure and the PDE nature of the problem. Nonetheless, in the simplest setting of a diagonal potential that is constant in the transverse direction, the quantities entering the eigenvalue asymptotics are expressed in terms of the integral of the trace of $V$ only, see Example~\ref{Ex:V.diag.ac} and Theorem~\ref{thm:ac.weak}.

The main ingredient in the proofs is the analysis of a Birman-Schwinger operator. Since the problem is not self-adjoint, the existence of eigenvalues in the gap of the essential spectrum does not follow from min-max considerations. Nonetheless, the weak coupling technique, relying on the isolation of a singular part $L$ of the Birman-Schwinger operator, admits a generalization to the non-self-adjoint setting. This is possible since $L$ is of finite rank and so the question of existence and asymptotics of eigenvalues is converted to a matrix problem, which is analyzed with the help of Rouch\'e's theorem eventually.
The proofs of the Lieb-Thirring type bounds are also based on complex analysis techniques, this time on a generalization of Jensen's identity due to \cite{Borichev-2009-41}. Inequalities of this type were established in \cite{Dubuisson-2014-78} for one- and multidimensional Dirac operators, and improved results in the multidimensional case recently appeared in \cite{Cuenin-2017-272}. The difference of our new estimates compared to the one-dimensional results in \cite{Dubuisson-2014-78} is that the weights in the eigenvalue sums are better, which leads to tighter upper bounds for the number of eigenvalues in certain subsets of the complex plane. The price to pay for this improvement is that the eigenvalue sums cannot be controlled by a single $L^p$ norm, but only by a combination of two such norms. This phenomenon was already encountered in \cite{Cuenin-2017-272}, and the reason for it is a lack of decay of the free resolvent as the spectral parameter tends to infinity.

To avoid technicalities related to domain questions we intentionally require that $V$ is both integrable and square-integrable throughout the entire paper. The technique allowing the omission of the $L^2$ assumption is described in~\cite[Sec.~6]{Cuenin-2014-15}. In the waveguide case, the $L^2$ assumption is also convenient, though not essential, when estimating the infinite sums arising from the decomposition of the resolvent, see Remark~\ref{rem:ac.L2}. To simplify the presentation of the weak coupling eigenvalue asymptotics \eqref{z+.exp}, \eqref{z-.exp} and \eqref{z+.exp.ac}, \eqref{z-.exp.ac}, we also do not strive for higher order terms in the expansion, although these could in principle be obtained in a similar way as in the Schr\"odinger case, see \eg~\cite{Simon-1976-97,Klaus-1977-108}.  We have now also all needed ingredients in hand to prove an analogue of the Lieb-Thirring inequalities in Subsection~\ref{subsec:1D.LT} for the graphene waveguides. However, the conformal map would be much more involved and we do not pursue this direction.

The paper is organized as follows. In Section 2 we briefly recall the relevant results of \cite{Cuenin-2014-15} for the one dimensional non-self-adjoint Dirac operator and establish the weak coupling eigenvalue asymptotics (Subsection~\ref{subsec:1D.wc}), and the Lieb-Thirring inequalities (Subsection~\ref{subsec:1D.LT}). 
In Section~\ref{sec:appl}, we apply these results to the one-dimensional damped wave equation,  (Subsection~\ref{subsec:DWE}), and graphene waveguides (Subsection~\ref{subsec:ac}).

\section{One dimensional Dirac operator}
\label{sec:1D}

The spectrum of the free operator $H$ with $m \geq 0$ in $L^2(\R;\C^2)$ 
\begin{equation}\label{H.def}
H =
- \ii \partial_x \sigma_1 + m \sigma_3=
 \begin{pmatrix}
m & - \ii \partial_x
\\
- \ii \partial_x & -m 
\end{pmatrix}
\end{equation}
reads
\begin{equation}\label{H.sp}
\sigma(H) = \se{3}(H) = (-\infty, -m] \cup [m, \infty);
\end{equation}
here and in the sequel, we use the essential spectrum $\se{3}$ defined as 
\begin{equation}
\sigma_{e3}(T):=\{ z \in \C \,: \, T-z \text{ is not Fredholm}\},
\end{equation}
see \eg~\cite[Sec.~IX]{EE} for details, and the Pauli matrices
\begin{equation}
\sigma_1=\begin{pmatrix}
0&1\\1&0
\end{pmatrix},
\quad 
\sigma_2=\begin{pmatrix}
0&-\ii\\\ii&0
\end{pmatrix},
\quad 
\sigma_3=\begin{pmatrix}
1&0\\0&-1
\end{pmatrix}.
\end{equation}
The $n\times n$ identity matrix is denoted by $I_n$.

As a perturbation, we consider a (possibly complex and non-symmetric) matrix potential 
\begin{equation}\label{V.def}
V: \R \to \C^{2 \times 2}, \quad \|V\| \in L^1(\R) \cap L^2(\R),
\end{equation}
where $\|V(x)\|$ is the operator norm in $\C^2$ of the matrix $V(x)$.

Our goal is to analyze the spectrum of $H+ V$.
Note that the $L^2$ condition in \eqref{V.def} is imposed for technical reasons only and is not strictly necessary for the results of this section. It offers the advantage that $H+V$ may be defined as an operator sum because $V$ is relatively $H$-compact and hence infinitesimally $H$-bounded. In addition, the relative compactness implies that the essential spectrum is stable, i.e.\ $\se{3}(H+V)=\se{3}(H)$. If only the $L^1$ condition is assumed in \eqref{V.def}, the perturbed operator can be defined by means of a resolvent formula; we refer to \cite{Cuenin-2014-15} for the details.

For any $p\in[1,\infty)$ we set
\begin{equation}\label{V1.norm.def}
\|V\|_p^p:= \int_{\R} \|V(x)\|^p \; \dd x.
\end{equation}	
%
%
It is proved in \cite[Thm.~2.1]{Cuenin-2014-15} that if $\|V\|_1 <1$, then all non-embedded eigenvalues of $H+V$ satisfy
\begin{equation}\label{spectral inclusion}
\spp(H+V) \setminus \se{3}(H) \subset \ov B_{mr_0}(m x_0) \, \dot{\cup} \, \ov B_{mr_0}(-m x_0) 
\end{equation}
where
\begin{equation}\label{x0r0}
x_0 := \left(\frac{\|V\|_1^4 - 2 \|V\|_1^2 +2}{4(1-\|V\|_1^2)} + \frac 12\right)^\frac12, \quad 
r_0 := \left(\frac{\|V\|_1^4 - 2 \|V\|_1^2 +2}{4(1-\|V\|_1^2)} - \frac 12\right)^\frac12.
\end{equation}

\subsection{Weakly coupled eigenvalues}
\label{subsec:1D.wc}

Here we analyze the point spectrum of $H+\eps V$ as $\eps \to 0+$, i.e.~the weak coupling regime. 
In the self-adjoint setting, a straightforward construction of test functions together with a min-max argument applied to $(H+V)^2$ shows that if the matrix 
\begin{equation}\label{M.cond}
\int_{\R} \left(V(x)^2 + m \{\sigma_3, V(x)\} \right)\; \dd x,
\end{equation}
where the brackets $\{\cdot, \cdot\}$ denote the anticommutator, has a negative eigenvalue, then $H + V$ has an eigenvalue in $(-m,m)$. In the weak coupling regime, \eqref{M.cond} can be translated (by ignoring the term of $\BigO(\eps^2)$) to $\pd{\int_{\R} V_{11} <0}$ or $\pd{\int_{\R} V_{22} > 0}$. In Theorem~\ref{thm:1D.weak} below, we prove that the intuition obtained from this simple self-adjoint argument is indeed correct.

The free resolvent $(H-z)^{-1}$, $z \in \rho(H)$, is an integral operator with the kernel (see \cite{Cuenin-2014-15} for details)
\begin{equation}\label{H.res.ker}
\begin{aligned}
\cR(x,y;z) &= \cN(x,y;z) e^{\ii k(z)|x-y|}, \quad 
\\
\cN(x,y;z) &:=
\frac \ii 2  
\begin{pmatrix}
\zeta(z) & \sgn (x-y)
\\
\sgn (x-y) & \zeta(z)^{-1}
\end{pmatrix}
\end{aligned}
\end{equation}
where
\begin{equation}\label{zeta.k.def}
\zeta(z) := \frac{z+m}{k(z)}, \quad k(z) := (z^2 - m^2)^\frac12
\end{equation}
and the square root on $\C \setminus [0,\infty)$ is chosen such that $\Im k(z)>0$. 

A natural technique is the Birman-Schwinger principle, derived for the Dirac operator e.g.\ in \cite[Thm.~6.1]{Cuenin-2014-15}. 
The Birman-Schwinger operator $Q(z)$ is an integral operator with the kernel
\begin{equation}
\cQ(x,y;z) = \cA(x) \cN(x,y;z) e^{\ii k(z)|x-y|} \cB(y)
\end{equation}
where the factorization of $V$ is based on its polar decomposition $V=U_V |V|$, namely
\begin{equation}
V = \cB \cA, \quad \cB:=U_V |V|^\frac 12, \quad  \cA:=|V|^\frac 12.
\end{equation}
Notice that 
\begin{equation}\label{AB.norm}
\|\cA(x)\|^2 = \|\cB(x)\|^2 = \|V(x)\|.
\end{equation}

As usual, we split $Q=Q(z)$ into a singular and a regular part $L$ and $M$, respectively, i.e.~
\begin{equation}\label{Q.def}
Q=L+M;
\end{equation}
the corresponding ($z$-dependent) kernels read 
\begin{align}\label{L.def}
\cL(x,y) &= \cA(x) 
\Upsilon
\cB(y),
\\
\cM(x,y) &= \cA(x) 
\Big(
\sgn(x-y) \sigma_1
e^{\ii k(z)|x-y|} 
+\Upsilon
(e^{\ii k(z)|x-y|}-1)
\Big)
\cB(y),
\label{M.def}
\end{align}
where
\begin{equation}\label{Ups.def}
\Upsilon = \Upsilon(\zeta(z))=
\frac{\ii }{2}
\begin{pmatrix}
\zeta(z) & 0
\\
0 & \zeta(z)^{-1}
\end{pmatrix}.
\end{equation}
Similarly as in \cite[Sec.~2]{Cuenin-2014-15}, estimating the quadratic form of $L$, we obtain the bound 
\begin{equation}\label{L.norm}
\|L\| \leq \|\Upsilon\|  \|V\|_1 = \frac12 \max\{|\zeta(z)|,|\zeta(z)|^{-1}\} \|V\|_1.
\end{equation}
For later use, we also notice that
\begin{equation}\label{Full Birman-Schwinger HS norm}
\|Q\|_{\mathfrak S ^2}^2 \leq  \left(\frac12+\|\Upsilon\|_{\mathfrak S ^2}^2\right)  \|V\|_1^2
= \frac14 \left(2+|\zeta(z)|^2+|\zeta(z)|^{-2}\right)  \|V\|_1^2.
\end{equation}

The following lemma shows that the possible singularities for $z = \pm m$ of the regular part $M$ are weaker than those of $L$.

\begin{lemma}\label{lem:M.HS}
Let $V$ be as in \eqref{V.def}, $M$ be the integral operator with the kernel \eqref{M.def} and  $\Upsilon$ as in~\eqref{Ups.def}. 
Then 
\begin{equation}\label{M.HS}
\|M\| =  o \big( \|\Upsilon\| \big),  \quad z \to \pm m, \ z \notin \sigma(H).
\end{equation}
\end{lemma}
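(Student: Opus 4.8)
The plan is to estimate the operator norm of $M$ by the Hilbert–Schmidt norm, which will be finite (and in fact behave like $\|V\|_1$, not like $\|\Upsilon\|$) as $z\to\pm m$. Recall that near $z=\pm m$ we have $k(z)\to 0$, hence $\zeta(z)\to\infty$ or $\zeta(z)\to 0$, so $\|\Upsilon\|=\tfrac12\max\{|\zeta|,|\zeta|^{-1}\}\to\infty$. Thus \eqref{M.HS} asserts that $M$ stays bounded (indeed $\BigO(1)$) while $\|\Upsilon\|$ blows up, and we only need to show $\|M\|=\BigO(1)$; the little-$o$ then follows immediately by dividing by $\|\Upsilon\|$.

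The kernel \eqref{M.def} splits into two pieces. The first piece, $\cA(x)\,\sgn(x-y)\sigma_1 e^{\ii k(z)|x-y|}\cB(y)$, has entries bounded in absolute value by $\|\cA(x)\|\,\|\cB(y)\|=\|V(x)\|^{1/2}\|V(y)\|^{1/2}$, using $|e^{\ii k(z)|x-y|}|\le 1$ since $\Im k(z)\ge 0$ and \eqref{AB.norm}. Therefore its Hilbert–Schmidt norm squared is bounded by $C\int\!\!\int \|V(x)\|\,\|V(y)\|\,\dd x\,\dd y = C\|V\|_1^2$, uniformly in $z$. The second piece is $\cA(x)\,\Upsilon\,(e^{\ii k(z)|x-y|}-1)\cB(y)$; here the dangerous factor $\Upsilon$ (with norm $\sim|\zeta(z)|^{\pm1}$) is compensated by the vanishing of $e^{\ii k(z)|x-y|}-1$ as $k(z)\to 0$. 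Using the elementary bound $|e^{\ii k|x-y|}-1|\le |k|\,|x-y|$, each entry is bounded by $\tfrac12|\zeta(z)|^{\pm1}|k(z)|\,|x-y|\,\|V(x)\|^{1/2}\|V(y)\|^{1/2}$, and since $|\zeta(z)^{\pm1}k(z)|=|z\pm m|$ stays bounded as $z\to\pm m$, this entry is $\BigO(|x-y|)\|V(x)\|^{1/2}\|V(y)\|^{1/2}$.

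The main obstacle is that the factor $|x-y|$ prevents a naive Hilbert–Schmidt estimate: $\int\!\!\int |x-y|^2\|V(x)\|\,\|V(y)\|\,\dd x\,\dd y$ need not be finite under \eqref{V.def} alone. The remedy — as in \cite[Sec.~2]{Cuenin-2014-15} — is to split $|x-y|$ into near and far contributions, or more cleanly to observe that $|e^{\ii k(z)|x-y|}-1|$ is also bounded by $2$ outright, and interpolate: for a small parameter depending on $z$, use the bound $|k(z)|\,|x-y|$ on the region $|x-y|\le R$ and the bound $2$ combined with the exponential decay $|e^{\ii k(z)|x-y|}|\le e^{-\Im k(z)|x-y|}$ on $|x-y|>R$. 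Even simpler, one can dominate the whole second-piece kernel by $C\min\{|z\pm m|\,|x-y|,\ |\zeta(z)|^{\pm 1}\,e^{-\Im k(z)|x-y|}\}\|V(x)\|^{1/2}\|V(y)\|^{1/2}$ and split the $x$-integral over $\{|x-y|\le 1\}$ and $\{|x-y|>1\}$. On the first region the kernel is $\BigO(|z\pm m|)\|V(x)\|^{1/2}\|V(y)\|^{1/2}$, giving an $\BigO(|z\pm m|^2\|V\|_1^2)$ contribution to $\|M\|_{\frS^2}^2$. On the second region one uses $|\zeta(z)^{\pm1}|e^{-\Im k(z)}\to 0$ (since the exponential, with $\Im k(z)\sim (2m)^{1/2}|z\mp m|^{1/2}$ in the massive case, dominates the algebraic growth $|\zeta|\sim |z\mp m|^{-1/2}$ — one checks $t^{-1/2}e^{-c\sqrt t}\to 0$), so this contribution is also $o(1)$; in the massless case $m=0$ one has $\zeta\equiv 1$ and the estimate is trivial. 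Combining, $\|M\|\le\|M\|_{\frS^2}=\BigO(\max\{|z\pm m|,\ |\zeta(z)^{\pm1}|e^{-\Im k(z)}\})\,\|V\|_1 = o(\|\Upsilon\|)$ as $z\to\pm m$, which is \eqref{M.HS}.
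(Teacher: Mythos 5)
Your overall strategy rests on the claim that $\|M\|=\BigO(1)$ as $z\to\pm m$, so that the $o(\|\Upsilon\|)$ bound follows trivially from $\|\Upsilon\|\to\infty$. That claim is false in general under \eqref{V.def}. For instance, taking $\|V(x)\|=(1+|x|)^{-2}$, testing the second-piece operator against $f=g=|V|^{1/2}$ gives a Rayleigh quotient of order
\begin{equation}
|\zeta(z)|\int_{\R^2}\|V(x)\|\bigl(1-e^{-\Im k(z)|x-y|}\bigr)\|V(y)\|\,\dd x\,\dd y\;\sim\;\log\!\bigl(1/\Im k(z)\bigr)\longrightarrow\infty,
\end{equation}
so $\|M\|$ does blow up, just more slowly than $\|\Upsilon\|$. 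In other words, the lemma is genuinely a little-$o$ statement and cannot be reduced to a uniform bound on $\|M\|$.

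The specific step that was supposed to salvage the far region $|x-y|>1$ is also incorrect. You assert that $|\zeta(z)^{\pm1}|\,e^{-\Im k(z)}\to 0$ as $z\to\pm m$, ``one checks $t^{-1/2}e^{-c\sqrt t}\to 0$''. But the limit here is $t=|z\mp m|\to 0$, for which $e^{-c\sqrt t}\to 1$ while $t^{-1/2}\to\infty$, so the product diverges. The exponential-decay heuristic only helps as $t\to\infty$, which is the opposite regime.

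The paper's argument avoids both issues by estimating the scaled quantity $\|\Upsilon\|^{-2}\|\cM(x,y)\|^2$ directly rather than $\|\cM(x,y)\|^2$ itself: one checks that $\|\Upsilon\|^{-2}\|\cM(x,y)\|^2\leq C\|V(x)\|\,\|V(y)\|(\|\Upsilon\|^{-2}+1)$, which for $\|\Upsilon\|>1$ gives a fixed integrable dominant, and that the scaled kernel tends to zero pointwise as $k(z)\to 0$ (the first summand because it carries a prefactor $\|\Upsilon\|^{-1}\to 0$, the second because $e^{\ii k(z)|x-y|}-1\to 0$). Dominated convergence then yields $\|\Upsilon\|^{-2}\|M\|_{\mathfrak S^2}^2\to 0$, which is exactly \eqref{M.HS}. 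Your first-piece estimate and the use of $|e^{\ii k|x-y|}-1|\leq |k|\,|x-y|$ with $|\zeta^{\pm1}k|=|z\pm m|$ are fine, but they cannot be completed to a proof without introducing a $z$-dependent normalization as above.
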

\begin{proof}
The proof is inspired by \cite[Lem.~1]{Klaus-1977-108}. Since $\Im k(z)>0$ for $z \notin \sigma(H)$, straightforward estimates and \eqref{AB.norm} show that there exists $C>0$ such that
\begin{equation}
\|\Upsilon\|^{-2}\|\cM(x,y)\|^2 \leq C \|V(x)\| \|V(y)\|
\left(
\|\Upsilon\|^{-2} + 1
\right), \quad z \notin \sigma(H).
\end{equation}
Thus, for $\|\Upsilon\|>1$, the function $(x,y) \mapsto \|\Upsilon\|^{-2}\|\cM(x,y)\|^2$ has an integrable upper bound. Since $\|\Upsilon\| \to \infty$ when $k(z) \to 0$ and
\begin{equation}
\lim_{k(z) \to 0}(e^{\ii k(z)|x-y|}-1) = 0, 
\end{equation}
the dominated convergence theorem yields
\begin{align*}
& \lim_{k(z) \to 0} \|\Upsilon\|^{-2} \int_{\R^2}  \|\cM(x,y)\|^2 \; \dd x \, \dd y=  0.
\qedhere
\end{align*}
%
\end{proof}

\begin{theorem}\label{thm:1D.weak}
Let $H$ be as in \eqref{H.def}, $V$ as in \eqref{V.def} and let $m>0$. Define the matrix
\begin{equation}\label{U0.def}
U:=\int_{\R} V(x) \, \dd x.
\end{equation}
If $\Re U_{11} <0$, then, for all sufficiently small $\eps >0$, there exists an eigenvalue $z_+(\eps)$ of $H + \eps V$ satisfying
\begin{equation}\label{z+.exp}
z_+(\eps) = m - \frac{m}{2} U_{11}^2 \eps^2 + o(\eps^2), \quad \eps \to 0+.
\end{equation}
Similarly, if $\Re U_{22} >0$
then, for all sufficiently small $\eps >0$, there exists an eigenvalue $z_-(\eps)$ of $H + \eps V$ satisfying
\begin{equation}\label{z-.exp}
z_-(\eps) = -m + \frac{m}{2} U_{22}^2 \eps^2 + o(\eps^2), \quad \eps \to 0+.
\end{equation}
\end{theorem}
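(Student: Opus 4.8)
The plan is to use the Birman--Schwinger principle: $z\in(-m,m)$ is an eigenvalue of $H+\eps V$ if and only if $-1$ is an eigenvalue of $\eps Q(z)=\eps(L+M)$, equivalently $I+\eps L(z)+\eps M(z)$ is not invertible. I would focus first on $z$ near $m$ (the case near $-m$ being symmetric, swapping the roles of the two diagonal entries and the sign of $\zeta$). Parametrize the gap by $k=k(z)$ with $\Im k>0$, so $z=\sqrt{k^2+m^2}$; as $k\to 0$ one has $\zeta(z)=(z+m)/k \sim 2m/k \to\infty$, hence $\Upsilon$ blows up like $1/k$, while by Lemma~\ref{lem:M.HS} the regular part satisfies $\|M\|=o(\|\Upsilon\|)=o(1/k)$. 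The singular part $L(z)$ has kernel $\cA(x)\Upsilon\cB(y)$, so it is rank one \emph{after} one notices that only the $(1,1)$ entry of $\Upsilon$ diverges: write $\Upsilon=\frac{\ii}{2}\zeta(z)\,e_1 e_1^{\!\top}+\frac{\ii}{2}\zeta(z)^{-1} e_2 e_2^{\!\top}$, so the dangerous part of $L$ is the rank-one operator $L_1(z)=\tfrac{\ii}{2}\zeta(z)\,\langle\,\cdot\,,\cB^*e_1\rangle\,\cA^*e_1$ (a similar but bounded rank-one piece $L_2$ carrying $\zeta^{-1}$ stays $O(k)$ and can be absorbed into the regular remainder). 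Thus the full Birman--Schwinger operator is $\eps Q(z)=\tfrac{\ii\eps}{2}\zeta(z)\,P + \eps R(z)$ where $P$ is the fixed rank-one projection-like operator $\phi\mapsto\langle\phi,\cB^*e_1\rangle\,\cA^*e_1$ and $\|R(z)\|=o(1/k)+O(1)=o(1/k)$.

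The next step is to reduce invertibility of $I+\eps Q(z)$ to a scalar equation by the Schur complement / Grushin approach adapted to rank-one singular perturbations, exactly as in the non-self-adjoint weak-coupling scheme described in the introduction. Since $\eps R(z)$ is small (for $\eps$ small and $z$ in a fixed neighbourhood of $m$), $I+\eps R(z)$ is invertible, and $I+\eps Q(z)$ is non-invertible iff
\begin{equation}\label{eq:scalar}
1+\frac{\ii\eps}{2}\zeta(z)\,\big\langle (I+\eps R(z))^{-1}\cA^*e_1,\ \cB^*e_1\big\rangle=0.
\end{equation}
Write $g(\eps,z)$ for the left-hand side. The key computation is the leading behaviour of the inner product: $\langle\cA^*e_1,\cB^*e_1\rangle=\int_{\R}\langle e_1,\cA(x)\cB(x)^*\!\cdots\rangle$ — more precisely, unwinding $\cB\cA=V$ (note the order) one gets $\big\langle(I+\eps R)^{-1}\cA^*e_1,\cB^*e_1\big\rangle = \int_{\R} V_{11}(x)\,\dd x + O(\eps) = U_{11}+O(\eps)$, using $\|R\|$ bounded on the relevant region and that the $\eps$-correction is integrable against $\|V\|\in L^1$. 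Together with $\zeta(z)\sim 2m/k$ this makes \eqref{eq:scalar} read $1+\dfrac{\ii\eps m}{k}\,\big(U_{11}+o(1)\big)=0$ as $k\to0$, $\eps\to0$.

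Finally I would solve \eqref{eq:scalar} for $k$ as a function of $\eps$ by Rouch\'e's theorem. The condition $\Re U_{11}<0$ is exactly what guarantees that $k=-\ii\eps m U_{11}$ lies in the upper half-plane (so it is an \emph{admissible} value of $k(z)$, i.e.\ corresponds to a genuine point of the resolvent set's boundary structure giving an eigenvalue rather than a spurious root): indeed $\Im(-\ii\eps m U_{11})=-\eps m\,\Re U_{11}>0$. Apply Rouch\'e on a small circle $|k|=C\eps$ (with $C$ between, say, $\tfrac12 m|U_{11}|$ and $2m|U_{11}|$): the ``main'' analytic function $k+\ii\eps m U_{11}$ has exactly one zero inside, while the error terms coming from $o(1)$-corrections and from the next order of $\zeta(z)=2m/k+O(k)$ are, after multiplying \eqref{eq:scalar} through by $k$, of strictly smaller modulus on that circle for $\eps$ small; hence $g(\eps,\cdot)$ has exactly one zero $k_+(\eps)=-\ii\eps m U_{11}+o(\eps)$ there. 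Translating back via $z_+=\sqrt{k_+^2+m^2}=m+\tfrac{k_+^2}{2m}+O(k_+^4)$ and $k_+^2=-\eps^2 m^2U_{11}^2+o(\eps^2)$ yields $z_+(\eps)=m-\tfrac{m}{2}U_{11}^2\eps^2+o(\eps^2)$, which is \eqref{z+.exp}. The case near $-m$ is identical with $e_1\leftrightarrow e_2$, $\zeta\leftrightarrow\zeta^{-1}$, and the sign flip in the square-root branch, producing \eqref{z-.exp} under $\Re U_{22}>0$.

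\textbf{Main obstacle.} The delicate point is not the formal expansion but justifying that the rank-one reduction \eqref{eq:scalar} captures \emph{all} eigenvalues near $m$ and that the discarded pieces (the $\zeta^{-1}$ part of $L$, the $o(\|\Upsilon\|)$ bound on $M$ from Lemma~\ref{lem:M.HS}, and the $(I+\eps R)^{-1}$ expansion) are genuinely lower order \emph{uniformly} as the two small parameters $\eps$ and $k$ go to zero in a linked fashion ($|k|\asymp\eps$); this is exactly where one must be careful that $\|M\|=o(1/|k|)$ is used with $|k|\gtrsim\eps$, so that $\eps\|M\|=o(1)$ but one needs the sharper statement $\eps|k|\,\|M\|=o(\eps^2)$ after multiplying by $k$ — which still follows from Lemma~\ref{lem:M.HS} since $\eps|k|\cdot o(1/|k|)=\eps\cdot o(1)=o(\eps)$, and one more factor is gained from $\zeta(z)^{-1}=O(|k|)$ in the relevant matrix entries. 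Keeping track of these orders through Rouch\'e is the crux.
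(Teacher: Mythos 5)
Your proposal is correct and sits within the same overall framework as the paper's proof (Birman--Schwinger reduction, isolation of the singular part of $Q$ near the threshold, finite-rank reduction, Rouch\'e). There is, however, a genuine structural difference worth noting. The paper keeps the \emph{entire} rank-two operator $L$ (with kernel $\cA\Upsilon\cB$) in the singular part, factorizes $I+\eps Q=(I+\eps M)(I+(I+\eps M)^{-1}\eps L)$, and reduces to the $2\times2$ determinant condition $\det(I_2+\eps\Upsilon(U+U_1))=0$; the Rouch\'e argument is then run in the variable $\alpha$, the relative deviation of $\zeta$ from the ansatz $\zeta_+^0=2\ii/(\eps U_{11})$. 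You instead split $\Upsilon=\tfrac{\ii}{2}\zeta\,e_1e_1^{\!\top}+\tfrac{\ii}{2}\zeta^{-1}e_2e_2^{\!\top}$, keep only the divergent rank-one piece as the singular part, absorb the $\zeta^{-1}$ piece into the remainder $R$ (legitimate since $|\zeta^{-1}|=O(|k|)$ near $z=m$), and thereby reduce to a \emph{scalar} equation; Rouch\'e is then applied in the variable $k$ after multiplying through by $k$. Both are rigorous. Your version is leaner for the present $2\times2$ case, and the observation $\langle\cB^*e_1,\cA e_1\rangle_{L^2}=\int V_{11}=U_{11}$ is exactly the paper's $\Tr(U\Upsilon)$ computation in disguise. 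The paper's rank-two bookkeeping, though slightly heavier here, is the one that carries over verbatim to the armchair-waveguide case (Theorem~\ref{thm:ac.weak}), where the analogous blocks $U^\pm$ are genuinely $2\times2$ and one cannot collapse to a single inner product.

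Two small precision points in your write-up, neither of which affects correctness: (i) the correction in $\langle(I+\eps R)^{-1}\cA e_1,\cB^*e_1\rangle=U_{11}+O(\eps)$ should be $o(1)$ rather than $O(\eps)$, because $\|R\|=o(1/|k|)$ is unbounded and only $\eps\|R\|=o(1)$ holds once $|k|\asymp\eps$; (ii) consequently, "$I+\eps R$ invertible for $z$ in a fixed neighbourhood of $m$" is not quite accurate — invertibility is only secured on the $\eps$-dependent annulus $|k|\asymp\eps$, which is indeed the only region where you use it, and you acknowledge this linkage explicitly in your closing paragraph. With those two phrasings tightened, the argument is complete.
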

\begin{proof}
According to the Birman-Schwinger principle, see~\cite[Thm.~6.1]{Cuenin-2014-15}, $z \in \spp(H + \eps V)$ if and only if $-1 \in \spp(\eps Q(z))$, where $Q$ is as in~\eqref{Q.def}. Thus we investigate the invertibility of $1 + \eps Q(z)$. Notice that, if $\eps \|M\|<1$, we have 
\begin{equation}\label{Q.decom}
I + \eps Q(z) = (I + \eps M)(I + (I + \eps M)^{-1}\eps L),
\end{equation}
so the invertibility of $Q(z)$ depends on the invertibility of the second factor in~\eqref{Q.decom}. We proceed with the analysis of the latter, find for which $z$ it is not invertible and show that, for these $z$, the condition $\eps \|M\|<1$ holds.

The crucial observation is that the kernel of $L$ is separated (in $x$ and $y$), hence $(I + \eps M)^{-1}\eps L$ is of finite rank and so $-1 \in \spp((I + \eps M)^{-1}\eps L)$ if and only if
\begin{equation}\label{detU.eq}
\det(I_2 +(I + \eps M)^{-1}\eps L) =0.
\end{equation}
To analyze \eqref{detU.eq}, it is convenient to write
\begin{equation}\label{L.AB.hat}
(I + \eps M)^{-1} L = \widehat A_\eps \Upsilon \widehat B,
\end{equation}
where 
\begin{equation}\label{AB.hat.det}
\begin{aligned}
\widehat B&: \quad L^2(\R;\C^2) \to \C^2: \quad  \Psi \mapsto \int_{\R} \cB(x)\Psi(x) \, \dd x,
\\
\widehat A_\eps&: \quad \C^2 \to L^2(\R;\C^2): \quad \Phi \mapsto (I+\eps M)^{-1} \cA(x) \Phi=: \cA_\eps(x) \Phi,
\end{aligned}
\end{equation}
where $\cA_\eps(x)$ is a matrix satisfying
\begin{equation}\label{Aeps.est}
\|\cA_\eps(x)-\cA(x)\| \leq r \|\cA(x)\|, \qquad r=r(\eps,z)=\frac{\eps \|M\|}{1-\eps \|M\|}.
\end{equation}
Notice also that (with $U$ as in \eqref{U0.def})
\begin{equation}\label{U1.est}
\widehat{B} \widehat{A}_\eps = U + U_1, \quad \|U_1\| \leq r \pd{\|V\|_1}.
\end{equation}
Employing these observations and Sylvester's determinant identity, we can rewrite \eqref{detU.eq} as
\begin{equation}
0 = \det(I_2 + \eps\Upsilon (U + U_1)) = 1 + \eps \Tr (\Upsilon (U + U_1))  -\frac14 \eps^2 \det (U + U_1).
\end{equation}
Since
\begin{equation}\label{TrU0}
\Tr (U\Upsilon) =  \frac{\ii}{2} (\zeta U_{11} + \zeta^{-1} U_{22}),
\end{equation}
our initial guess (ignoring the smaller terms) for the dependence of $\zeta$ on $\eps$ reads 
\begin{equation}
\zeta^0_+= \frac{2 \ii}{\eps U_{11}}, \qquad \zeta^0_-= - \frac{\ii}{2} \eps U_{22}.
\end{equation}
Notice that the assumption that $\Re U_{11}<0$ or $\Re U_{22} > 0$ is needed since the allowed region in terms of $\zeta$ is $\Im \zeta <0$, see the discussion of \cite{Cuenin-2014-79} after (2.6) there.

Finally, we prove that there is indeed a solution $\zeta_+$ of~\eqref{detU.eq} in a neighborhood of $\zeta^0_+$; the reasoning for $\zeta_-$ is analogous. To this end, for $|\alpha|<\delta_0$, we define $\zeta_\alpha:=\zeta^0_+(1+\alpha)$ and select $\delta_0$ so small that, with some $\beta>0$, we have $\Im \zeta_\alpha < - \beta <0$ for all $|\alpha|<\delta_0$ and $\eps \to 0+$. Observe that with this $\zeta_\alpha$, we have (uniformly in $\alpha$) that  $\eps\|M\| = o(\eps |\zeta_+^0|) = o(1)$ as $\eps \to 0+$ and so $r = o(1)$ as $\eps \to 0+$; 
the former justifies \eqref{Q.decom} in particular.

Take $\zeta=\zeta_\alpha$ and define the function 
\begin{equation}
F(\alpha):=
\det (
I_2 + \Upsilon (U+U_1)
), \quad |\alpha|<\delta_0.
\end{equation}
Since, for all $|\alpha|<\delta_0$, we have $\Im \zeta_\alpha < - \beta < 0$ and $\zeta_\alpha \to \infty$ as $\eps \to 0+$, i.e.~the corresponding $z_\alpha \in \rho(H)$ and $z_\alpha \to m$ as $\eps \to 0+$, the function $F$ is holomorphic for $|\alpha|<\delta_0$. Moreover, using \eqref{Aeps.est}, \eqref{U1.est}, we get 
\begin{equation}
\begin{aligned}
F(\alpha) &= 1 + \eps \Tr (U \Upsilon ) + \eps \Tr (U_1 \Upsilon ) -\frac14 \eps^2  \det (U+U_1) 
\\
& = 1 + \big(-(1+\alpha) + \BigO(\eps^2) \big) + o(1) + \eps^2 \BigO(1)
\\
& = -\alpha + o(1), \qquad \eps \to 0+.
\end{aligned}
\end{equation}
Hence, Rouch\'e's theorem implies that, for all $0<\eps<\eps_{\delta_0}$, functions $F(\alpha)$ and $G(\alpha):=-\alpha$ have the same number of zeros in the ball $B_{\delta_0}(0)$. Notice that the same reasoning is valid for any $0<\delta<\delta_0$, thus we obtain that the sought solution of~\eqref{detU.eq} reads 
\begin{equation}\label{zetap.sol}
\zeta_+ = \zeta^0_+(1+o(1)), \quad \eps \to 0+. 
\end{equation}
The last step, yielding~\eqref{z+.exp}, is to use the relation \eqref{zeta.k.def} between $\zeta$ and $z$ and rewrite \eqref{zetap.sol} in terms of $z_+$. 
\end{proof}

\begin{remark}\label{rem:wc}
\begin{enumerate}[\upshape i)]
\item Theorem \ref{thm:1D.weak} shows that the spectral estimate \eqref{spectral inclusion} is sharp in the weak coupling regime. Indeed, the latter can be stated as
\begin{equation}
|z\mp m|\leq \frac{m}{2}\eps^2\|V\|_1^2+o(\eps^2), \quad \eps \to 0+.
\end{equation}
\item Notice that in the proof of Theorem~\ref{thm:1D.weak}, we use that $\|M\| = o(\|\Upsilon(\zeta(z))\|)$ as $z \to \pm m$, $z \notin \sigma(H)$ only and not the particular structure of $M$. The latter would be needed to derive more terms in the expansions \eqref{z+.exp}, \eqref{z-.exp}. 	
\item It is known that the weak coupling limit for the Dirac operator is equivalent to the non-relativistic limit. We do not pursue this connection here and refer to \cite{Cuenin-2014-79} for a discussion in this matter. 
\end{enumerate}
\end{remark}

\subsection{Lieb-Thirring inequalities}
\label{subsec:1D.LT}

In the last subsection we have seen that the massive Dirac operator is critical, \ie~an arbitrarily small (non-self-adjoint) perturbation will create an eigenvalue. Here we prove an upper bound for the number of eigenvalues in certain subsets of the complex plane. The upper bound will be a consequence of a Lieb-Thirring type inequality. We prove similar results for the (non-critical) massless Dirac operator.

\begin{theorem}\label{thm.LT}
Let $H$ be as in \eqref{H.def} and $V$ as in \eqref{V.def}. If $m=0$ and $\|V\|_1\geq 1$, then 
\begin{equation}\label{LT meq0}
\sum_{z\in\spd(H+V)}\frac{\dist(z,\sigma(H))}{(|z|+1)^{2}}\leq C(1+\|V\|_2^4)\|V\|_1^2,
\end{equation}
where each eigenvalue is counted according to its algebraic multiplicity.
If $m>0$, then for any $\tau>0$ we have that
\begin{equation}\label{LT mg0}
\sum_{z\in\spd(H+V)}\frac{\dist(z,\sigma(H))|m^2-z^2|^{\frac{\tau}{2}}}{(m+|z|)^{2+\tau}}\leq C_{\tau} \frac{A_m(V)}{m} \max \{\|V\|_1,\|V\|_1^2 \}
\end{equation}
where
\begin{equation}
A_m(V)=\begin{cases}
\ds
\min \left\{
\frac{1}{1-\|V\|_1 e^{\frac{1}{2}(\|V\|_1+1)^2}}, \frac{\left(1+m^{-2}\|V\|_2^4\right)^{2+\tau}}{\rho_0^2} \right\} & \text{ if  }  \|V\|_1< \rho_0,
\\[4mm]
\ds
\frac{\left(1+m^{-2}\|V\|_2^4\right)^{2+\tau}}{\rho_0^2} & \text{ if  }\|V\|_1\geq \rho_0,
\end{cases}
\end{equation}
and where $\rho_0$ is the unique solution to $\rho e^{\frac{1}{2}(\rho+1)^2}=1$ 
($\rho_0\approx 0.38$).
\end{theorem}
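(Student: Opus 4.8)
The plan is to use the Birman--Schwinger principle together with a complex-analytic counting argument based on the generalized Jensen identity of Borichev--Golinskii--Kupin \cite{Borichev-2009-41}. Recall that $z\in\spd(H+V)$ precisely when $1$ is an eigenvalue of the compact operator $-Q(z)$, where $Q(z)$ is the Birman--Schwinger operator with the kernel $\cQ$ above. The strategy is to build a holomorphic function on a suitable domain whose zeros capture the eigenvalues of $H+V$ with the correct multiplicity, control its growth near the boundary and at infinity, and then feed these bounds into the Jensen-type inequality, which converts boundary/interior growth estimates into summability of the zeros with the weights appearing in \eqref{LT meq0} and \eqref{LT mg0}.

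More concretely, the first step is to pass from $\C\setminus\sigma(H)$ to the unit disc $\D$ (or a half-plane) via the conformal uniformization of the two-sheeted structure: in the massive case one uses the map built from $k(z)=(z^2-m^2)^{1/2}$ and $\zeta(z)$ that sends $\C\setminus[(-\infty,-m]\cup[m,\infty)]$ onto $\D$, so that the thresholds $\pm m$ become (boundary) points of $\D$ and $z=\infty$ becomes another such point; in the massless case the analogous map for $k(z)=z$ is simpler. Pulling back $Q(z)$ gives an operator-valued holomorphic function on $\D$ whose perturbation determinant $h(w):=\Det(I+Q(z(w)))$ (a regularized $2$-determinant, using $\|Q\|_{\mathfrak S^2}<\infty$ from \eqref{Full Birman-Schwinger HS norm}) is holomorphic on $\D$ and vanishes exactly at the (pulled-back) eigenvalues, with order equal to algebraic multiplicity. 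The second step is the growth estimate: using $\log|\Det{}_2(I+Q)|\le \tfrac12\|Q\|_{\mathfrak S^2}^2$ together with \eqref{Full Birman-Schwinger HS norm}, \eqref{L.norm} and the explicit form of $\zeta(z(w))$, one bounds $\log|h(w)|$ by a constant times $(1+\|V\|_2^4)\|V\|_1^2$ times a fixed power of $(1-|w|)^{-1}$ near the relevant boundary points (the thresholds, where $|\zeta|+|\zeta|^{-1}$ blows up) and by a constant near the point corresponding to $z=\infty$ — here the lack of resolvent decay at infinity forces the two-norm combination rather than a single $L^p$ norm, as the paper already flags. The third step is to apply the Borichev--Golinskii--Kupin theorem: with these boundary singularities of the stated orders, their result yields $\sum_j (1-|w_j|)\,\prod|w_j-\xi_i|^{s_i}<\infty$ with an explicit constant, and the final step is to translate $1-|w|$ and $|w-\xi_i|$ back through the conformal map into $\dist(z,\sigma(H))$, $|m^2-z^2|$, and $(m+|z|)$, which produces exactly the weights in \eqref{LT meq0} and \eqref{LT mg0}; the case distinction in $A_m(V)$ and the threshold $\rho_0$ come from optimizing between the two available bounds on $\log|h|$ — one from the Neumann series for $(I+L)^{-1}$ valid when $\|V\|_1 e^{(\|V\|_1+1)^2/2}<1$, the other from the crude $\mathfrak S^2$ bound valid always.

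The main obstacle I expect is the bookkeeping of the conformal map near the thresholds and at infinity simultaneously: one must identify precisely which boundary points of $\D$ correspond to $z=\pm m$ and $z=\infty$, determine the exponents with which $\log|h(w)|$ blows up at each (this is where the power $2+\tau$ and the shift by $2$ in the denominators originate), and verify that the pullback of $\dist(z,\sigma(H))$ is comparable to $(1-|w|)$ times the appropriate Blaschke factors — getting the exponents matched to the claimed weights is the delicate part. A secondary technical point is justifying that the regularized determinant $h$ is well-defined and nonvanishing away from eigenvalues, and that its zero orders equal the algebraic multiplicities; this follows from standard Birman--Schwinger/perturbation-determinant theory once one checks the $\mathfrak S^2$ membership and analyticity, which \eqref{Full Birman-Schwinger HS norm} and the explicit kernel already supply. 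The optimization giving $A_m(V)$ and the constant $\rho_0\approx 0.38$ is then elementary calculus once both growth bounds are in hand.
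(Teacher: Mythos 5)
Your proposal follows essentially the same route as the paper: a regularized Birman--Schwinger determinant $h(z)={\det}_2(I+Q(z))$, a conformal map of $\rho(H)$ onto the unit disc $\D$ with a normalization at a base point $\ii\eta\in\rho(H+V)$ (so that $g(0)=1$), growth bounds on $\log|h|$ coming from \eqref{Full Birman-Schwinger HS norm}, and then the Borichev--Golinskii--Kupin theorem in the massive case; for $m=0$ the paper uses the plain Blaschke condition since $\log|g|$ is uniformly bounded there, so no BGK exponents are needed. One small correction in attribution: the threshold $\rho_0$ and the two branches of $A_m(V)$ do not originate from a Neumann series for $(I+L)^{-1}$, but from two alternative ways of lower-bounding $|h(\ii\eta)|$ at the normalization point --- a Lipschitz-type estimate for ${\det}_2$ which requires $\|V\|_1 e^{\frac12(\|V\|_1+1)^2}<1$, versus a Kato--Seiler--Simon estimate after choosing $\eta\sim\|V\|_2^{2}$, and it is the latter choice of $\eta$ that injects the $\|V\|_2^4$-dependence into the corresponding branch of $A_m(V)$ through the distortion factor $(1+\eta^2)^{2+\tau}$.
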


\begin{remark}
\begin{enumerate}[\upshape i)]
\item We recall that in the massless case ($m=0$) the spectral inclusion \eqref{spectral inclusion} states that there are no eigenvalues whenever $\|V\|_1<1$. For this reason we are assuming that $\|V\|_1\geq 1$ above.
\item We emphasize the dependence of the bound on the $L^1$ norm of $V$. One reason is that this norm is invariant with respect to rescaling of the mass, i.e.\ the substitution $V\to m^{-1}V(m^{-1}\cdot)$ does not change the $L^1$ norm. Secondly, a straightforward adaptation of the proof shows that if instead of $\|V\|\in L^2(\R)$ we assume that $\|V\|\in L^p(\R)$ for some $p>1$, then \eqref{LT meq0} and \eqref{LT mg0} hold with $\|V\|_2^2$ replaced by $\|V\|_p^{2p/(p-1)}$.
\item The bounds of Theorem \ref{thm.LT} should be compared with those of \cite{Dubuisson-2014-78}. In the one-dimensional case it is claimed that, for $p>1$ and e.g.\ for $m=0$, one has
\begin{equation}\label{Dubuisson massless}
\sum_{z\in\spd(H+V)}\frac{\dist(z,\sigma(H))^{p+\tau}}{(|z|+1)^{2(p+\tau)}}\leq C_{p,\tau}\|V\|_p^p
\end{equation}
for any $\tau\in (0,\min(p-1,1))$. In fact, an inspection of the proof shows that the constant $C_{p,\tau}$ still depends on the potential through the parameter $b$ introduced in Section 4.1 there. Our estimate \eqref{LT mg0} yields better weights both for sequences of eigenvalues accumulating to a point in the essential spectrum or tending to infinity. Moreover, the constant is universal; the dependence on (the $L^2$ norm of) the potential is exhibited explicitly. The comparison in the massive case is less obvious, and we will not pursue the issue here. 
\item[iv)] In the self-adjoint case, Lieb-Thirring type inequalities for one-dimensional Dirac operators may be found in \cite{Frank-2011-157}. Note that even there the eigenvalue sums cannot be controlled by a single $L^p$ norm. Similar inequalities for resonances were established in \cite{Iantchenko-2014-256} and \cite{Korotyaev-2014-104} in the massless case and in \cite{Iantchenko-2014-420} for the massive case. 
\end{enumerate}
\end{remark} 
Theorem \ref{thm.LT} has the following consequences for the number of eigenvalues $N_K (H+V)$ of $H+V$ in a compact subset $K\subset\rho(H)$. For any $\delta,R> 0$ and $\epsilon\geq 0$ we set 
\begin{equation}
K_{\delta,\epsilon,R}=\{z\in\C:\dist(z,\sigma(H))\geq \delta,\,\dist(z,\{-m,+m\})\geq \epsilon,\,|z|\leq R\}.
\end{equation}
\begin{corollary}
If $m=0$ and $\|V\|_1\geq 1$, then we have
\begin{equation}
N_{K_{\delta,0,R}}\leq \frac{C}{\delta}(1+R^2)(1+\|V\|_2^4)\|V\|_1^2.
\end{equation}
If $m>0$, then we have
\begin{equation}\label{Number of eigenvalues mg0}
N_{K_{\delta,\epsilon,R}}\leq \frac{C_{\tau}}{\delta} \max\{ m^{1+\frac \tau 2}\epsilon^{-\frac \tau 2},R^{2}\} \frac{A_{m}(V)}{m}
\max\{\|V\|_1,\|V\|_1^2\}.
\end{equation}
\end{corollary}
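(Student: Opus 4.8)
The plan is to derive the corollary directly from Theorem~\ref{thm.LT} by a simple weighting argument: on the compact set $K_{\delta,\epsilon,R}$ the Lieb--Thirring weight appearing on the left-hand side of \eqref{LT meq0} or \eqref{LT mg0} is bounded below by a constant depending only on $\delta,\epsilon,R$ (and $m,\tau$), so that the eigenvalue count $N_{K_{\delta,\epsilon,R}}$ — which is exactly the number of summands with $z$ in that set, each counted with algebraic multiplicity — is at most that constant's reciprocal times the right-hand side of the Lieb--Thirring bound. Concretely, for every $z\in K_{\delta,\epsilon,R}$ each term of the sum is $\geq \big(\inf_{z\in K_{\delta,\epsilon,R}} w(z)\big)$, where $w$ is the respective weight, whence $N_{K_{\delta,\epsilon,R}} \le \big(\inf w\big)^{-1} \cdot (\text{RHS of the LT inequality})$.

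In the massless case the weight is $w(z)=\dist(z,\sigma(H))/(|z|+1)^2$. On $K_{\delta,0,R}$ one has $\dist(z,\sigma(H))\ge\delta$ and $(|z|+1)^2 \le (R+1)^2 \le 2(1+R^2)$ (or simply $(R+1)^2$, absorbing constants into $C$), so $w(z)\ge c\,\delta/(1+R^2)$. Dividing \eqref{LT meq0} by this lower bound gives
\begin{equation}
N_{K_{\delta,0,R}} \le \frac{C}{\delta}(1+R^2)(1+\|V\|_2^4)\|V\|_1^2,
\end{equation}
which is the claimed estimate. In the massive case the weight is $w(z)=\dist(z,\sigma(H))\,|m^2-z^2|^{\tau/2}/(m+|z|)^{2+\tau}$. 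On $K_{\delta,\epsilon,R}$ we again use $\dist(z,\sigma(H))\ge\delta$. For the numerator factor $|m^2-z^2|^{\tau/2}=|z-m|^{\tau/2}|z+m|^{\tau/2}$ we note $|z\pm m|\ge\dist(z,\{-m,m\})\ge\epsilon$, so $|m^2-z^2|^{\tau/2}\ge\epsilon^{\tau/2}\cdot\epsilon^{\tau/2}$ would be too crude near one threshold — instead use $|m^2-z^2|^{\tau/2}\ge \epsilon^{\tau/2}\,|z\pm m|^{\tau/2}\ge \epsilon^{\tau/2} m^{\tau/2}$ by picking whichever of $|z-m|,|z+m|$ is $\ge m$ (at least one always is, since they sum to $\ge 2m$). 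For the denominator, $(m+|z|)^{2+\tau}\le(m+R)^{2+\tau}$, and one balances this against $m^{\tau/2}\epsilon^{\tau/2}$ in the numerator; after a short case distinction according to whether $|z|$ is comparable to $m$ or to $R$, one arrives at a lower bound $w(z)\ge c_\tau\,\delta\,/\max\{m^{1+\tau/2}\epsilon^{-\tau/2},R^2\}$, and \eqref{Number of eigenvalues mg0} follows upon dividing \eqref{LT mg0} by this.

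The only mildly delicate point — and the step I would treat most carefully — is the bookkeeping of the $m$- and $R$-powers in the massive case: one must check that the exponents and the $\max\{\cdot,\cdot\}$ in the final bound are consistent with the interplay between the $|m^2-z^2|^{\tau/2}$ factor in the numerator of the weight and the $(m+|z|)^{2+\tau}$ factor in the denominator over the whole range of $|z|\in[0,R]$ with $\dist(z,\{\pm m\})\ge\epsilon$; in particular that one genuinely obtains the $\max\{m^{1+\tau/2}\epsilon^{-\tau/2},R^2\}$ behaviour rather than something larger. Everything else is elementary. No new analytic input beyond Theorem~\ref{thm.LT} is required, and the proof is short enough that it can be stated in a few lines.
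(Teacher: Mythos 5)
Your proposal is correct and follows essentially the same route as the paper: you divide the Lieb--Thirring sum by a uniform lower bound on the weight over $K_{\delta,\epsilon,R}$, and for that lower bound you use the identical ingredients -- the same observation $|m^2-z^2|\geq m\epsilon$ (with the same justification that at least one of $|z\pm m|$ is $\geq m$) and the same case distinction (the paper splits $|z|\leq 2m$ versus $|z|>2m$, which is your ``$|z|$ comparable to $m$ or to $R$''). The paper's proof is exactly the two-line distortion bound you describe.
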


\begin{proof}
For $m>0$ the claim follows from the lower bound
\begin{equation}
\frac{\dist(z,\sigma(H))|m^2-z^2|^{\frac{\tau}{2}}}{(m+|z|)^{2+\tau}}
\geq c_{\tau}\delta\min \{ m^{-1-\frac\tau 2}\epsilon^{\frac \tau 2},R^{-2} \}.
\end{equation}
This can be seen by treating the cases $|z|\leq 2m$ and $|z|>2m$ separately and using that $|z^2-m^2|\geq m\epsilon$ in the first case. The case $m=0$ is even easier.
\end{proof}

\begin{proof}[Proof of Theorem \ref{thm.LT}]
The proof is based on complex analysis. The general approach is explained very well in \cite{Demuth-2013-232} and we refer the reader to this article for more details. We treat the massive and the massless case separately, starting with the latter.
For simplicity we prove \eqref{LT meq0} only for eigenvalues in the upper half plane~$\C^+$; the proof for the lower half plane is analogous. The basic idea in the complex analysis approach is to relate the eigenvalues to the zeros of a holomorphic function. It is convenient to define the following maps (recall that $Q=|V|^\frac 12 R(z)V^\frac 12$):
\begin{equation}\label{conformal map massless}
\begin{split}
&h:\C^+\to \C,\quad h(z):={\det}_2(I+Q),\\
&\varphi:\C^+\to\mathbb{D},\quad \varphi(z):=\frac{z-\ii}{z+\ii},\\
&\nu:\mathbb{D}\to\mathbb{D},\quad \nu(w):=\frac{w+\varphi(\ii\eta)}{1+\varphi(\ii\eta)w},\\
&\psi:\C^+\to\mathbb{D},\quad\psi(z):=\nu^{-1}(\varphi(z)),\\
&g:\mathbb{D}\to\C,\quad g(w):=\frac{h(\psi^{-1}(w))}{h(\ii\eta)},
\end{split}
\end{equation}
where $\ii\eta\in \rho(H+V)$ will be chosen momentarily. Note that $\varphi,\nu,\psi$ are conformal maps and the regularized determinant $\det_2$ is defined for any $T\in\mathfrak{S}^2$ by
\begin{equation}
{\det}_2(I+T):=\det\left((I+T) e^{-T}\right).
\end{equation}
We then have that $\det_2(I+T)=0$ if and only if $(I+T)$ is not invertible; see \eg~\cite[Thm.~9.2]{Simon-2005}. In particular, $h(z)=0$ if and only if $z\in \spp(H+V)$. Moreover, $h$ is analytic in the Hilbert-Schmidt norm, see e.g.\ \cite{Frank-2014}.
From \cite[Thm.~9.2]{Simon-2005} and \eqref{Full Birman-Schwinger HS norm} we have the uniform bound (notice the $\|\Upsilon\|_{\mathfrak{S}^2}^2=1/2$ for $m=0$)
\begin{equation}\label{uniform bound for log h massless}
\log|h(z)|\leq \Gamma_2\|Q(z
)\|_{\mathfrak{S}^2}^2
\leq \frac{1}{2} \|V\|_1^2.
\end{equation}
The value of the optimal constant is $\Gamma_2=1/2$, see e.g.\ formula (2.2) in Chaper IV of \cite{Gohberg-1969}. 
This implies the following estimate,
\begin{equation}
\log|g(w)|\leq \frac{1}{2}\|V\|_1^2-\log|h(\ii\eta)|.
\end{equation}
We will use the $L^2$ norm to estimate the second term. First observe that
\begin{equation}\label{eq. HS bound massless with L2 norm}
\|Q(\ii \eta)\|_{\mathfrak{S}^2}\leq 2\||V|^\frac12 |\sqrt{-\Delta}-\ii\eta|^{-\frac12}\|_{\mathfrak{S}^4}^2
\leq (2\pi)^{-\frac{1}{4}}2\sqrt{\pi}\|V\|_2\eta^{-\frac{1}{2}},
\end{equation} 
where we used the Schwarz inequality in Schatten spaces in the first and the Kato-Seiler-Simon bound \cite[Thm.~4.1]{Simon-2005} in the second estimate; a more precise bound is stated in \cite[Theorem 4.3]{Cuenin-2014-15}. We also used that the norm of the kernel of $|R_0(\ii\eta)|^{1/2}$ is dominated by twice the absolute value of the kernel of $|\sqrt{-\Delta}-\ii\eta|^{-\frac12}$ and that the $L^4$ norm of the function $||\cdot|-\ii\eta|^{-\frac12}$ is bounded from above by $\pi^{\frac 14}\eta^{-\frac14}$. We set 
\begin{equation}\label{eq. choice of eta}
\eta^{\frac12}=\gamma(2\pi)^{-\frac{1}{4}}2\sqrt{\pi}\|V\|_2,\quad \text{where} \quad  \gamma=e^\frac 52.
\end{equation}
This guarantees that $\ii\eta\in\rho(H+V)$ since $\gamma^{-1}<1$. By \cite[Thm.~9.2]{Simon-2005} we have the bound
\begin{equation}
|h(\ii\eta)-1|\leq \gamma^{-1}\exp\left(\frac12(\gamma^{-1}+1)^2\right)\leq \gamma^{-1} e^2\leq e^{-\frac 12}.
\end{equation}
and so, since $\|V\|_1 \geq 1$, 
\begin{equation}\label{eq. bound on g(w)}
\log|g(w)|\leq\frac12 \|V\|_1^2 + \frac 12 \leq \|V\|_1^2.
\end{equation}
Since $g$ is holomorphic on the unit disk $\mathbb{D}$ and continuous up to the boundary, the following Blaschke condition holds,
\begin{equation}\label{eq. Blaschke}
\sum_{g(w)=0}(1-|w|)\leq \sum_{g(w)=0}\left|\frac{1}{w}\right|\leq 
\sup_{0<r<1}\frac{1}{2\pi}\int_0^{2\pi}\log|g(r\e^{\ii \theta})|\dd\theta\leq \log\|g\|_{\infty}.
\end{equation}
Here and in the following, every zero is counted according to its multiplicity. We have also used the normalization condition $g(0)=1$ in \eqref{eq. Blaschke}, which follows from $\psi(\ii\eta)=0$. To translate the result back to the $z$-plane we use the distortion bound 
\begin{equation}\label{eq. Koebe}
(1-|w|)\sim |\psi'(z)|\dist(z,\sigma(H))\geq \frac{2}{1+\eta^2}\frac{\dist(z,\sigma(H))}{|z+\ii|^2}.
\end{equation} 
The first inequailty follows from the Koebe distortion theorem, the second from an explicit computation.
Combination of \eqref{eq. bound on g(w)}, \eqref{eq. Blaschke} and \eqref{eq. Koebe} yields 
\begin{equation}
\sum_{z\in\spp(H+V)}\frac{\dist(z,\sigma(H))}{|z+\ii|^{2}}\leq C(1+\eta^2) \|V\|_1^2.
\end{equation}
By the choice of $\eta$, this is equivalent to \eqref{LT meq0} for $z\in\C^+$.

Next, we consider the massive case. Without loss of generality we may restrict our attention to the case $m=1$; the general case follows by scaling. We use the same maps as in \eqref{conformal map massless} except that we replace $\varphi$ by
\begin{equation}
\varphi:\rho(H)\to\mathbb{D},\quad \varphi(z):=\frac{\sqrt{(z+1)/(z-1)}-\ii}{\sqrt{(z+1)/(z-1)}+\ii}.
\end{equation}
Instead of \eqref{uniform bound for log h massless} we now have (again from \eqref{Full Birman-Schwinger HS norm}) the estimate
\begin{equation}
\log|h(z)| 
\leq \frac12 \|Q(z)\|_{\mathfrak{S}^2}^2
= \frac 18 \left(2 + |\zeta(z)|^2 + |\zeta(z)|^{-2}\right) \|V\|_1^2.
\end{equation}
Assume first that $\|V\|_1<\rho_0$. By \eqref{spectral inclusion} we may choose $\eta>0$ (independent of $V$) such that $\ii\eta\in\rho(H+V)$, \ie~$h(\ii \eta)\neq 0$. Since $|\zeta(\ii \eta)|=1$, we have
\begin{equation}
\|Q(\ii \eta)\|_{\mathfrak{S}^2} \leq \|V\|_1.
\end{equation}
It follows that
\begin{equation}
|h(\ii\eta)-1| \leq \|V\|_1 \exp\left(\frac{1}{2}(\|V\|_1+1)^2\right),
\end{equation}
and thus 
\begin{equation}\label{eq. bound on g(w) massive}
\begin{aligned}
\log|g(w)|&
\leq \frac 18 \left(2 + |\zeta(\psi^{-1}(w))|^2 + |\zeta(\psi^{-1}(w))|^{-2}\right) \|V\|_1^2
+\frac{e^{\frac{1}{2}(\|V\|_1+1)^2}\|V\|_1}{1-\|V\|_1 e^{\frac{1}{2}(\|V\|_1+1)^2}}
\\
&\leq C\|V\|_1(1-\|V\|_1 e^{\frac{1}{2}(\|V\|_1+1)^2})^{-1}|1-w|^{-2}|1+w|^{-2}.
\end{aligned}
\end{equation}
In the first estimate we used the triangle inequality $|h(\ii\eta)|\geq 1-|h(\ii\eta)-1|$, the monotonicity of the logarithm and the elementary inequality $\log(1-x)\geq -x/(1-x)$ for $x=|h(\ii\eta)-1|\in (0,1)$.
The second estimate follows from the definition of $\psi$, some elementary inequalities for positive numbers and the fact that $\|V\|_1^2<\|V\|_1$.
Since the right hand side is no longer bounded, we cannot use \eqref{eq. Blaschke}. Instead, we have to use a more refined result due to \cite{Borichev-2009-41}. In our case it implies that
\begin{equation}\label{eq. sum zeros of g(w) massive}
\sum_{w:\,g(w)=0}(1-|w|)|1-w|^{1+\tau}|1+w|^{1+\tau}\leq C_{\tau} (1-\|V\|_1 e^{\frac{1}{2}(\|V\|_1+1)^2})^{-1}\|V\|_1
\end{equation}
for any $\tau>0$. Using the estimates
\begin{equation}\label{eq. distortion bounds}
\begin{split}
(1-|w|)&\gtrsim\dist(z,\sigma(H))(1+|z|)^{-1}|z^2-1|^{-\frac{1}{2}}(1+\eta^2)^{-1},\\
|1-w||1+w|&\gtrsim (1+|z|)^{-1}|z^2-1|^{\frac{1}{2}}(1+\eta^2)^{-1},
\end{split}
\end{equation}
which follow from straightforward albeit tedious computations, directly from the definition of $\psi$ and by Koebe's distortion theorem we infer from \eqref{eq. sum zeros of g(w) massive} that
\begin{equation}\label{eq. half of LT mg0}
\sum_{z\in\spd(H+V)}\frac{\dist(z,\sigma(H))|1-z^2|^{\frac{\tau}{2}}}{(1+|z|)^{2+\tau}}\leq C_{\tau} (1-\|V\|_1 e^{\frac{1}{2}(\|V\|_1+1)^2})^{-1}\|V\|_1.
\end{equation}
This is half of the desired bound \eqref{LT mg0} for $m=1$ and $\|V\|_1<\rho_0$.
Observe that if $\|V\|_1<\rho_0/2$, then the right hand side of \eqref{eq. half of LT mg0} is bounded from above by a constant multiple of $\max\{\|V\|_1,\|V\|_1^2\}$, while $A_m(V)$ is bounded from below by a positive constant. Hence, it remains to prove \eqref{LT mg0} for $\|V\|_1\geq \rho_0/2$ and with $A_m(V)=(1+\|V\|_2^4)^{2+\tau}$.
Since we have $|\sqrt{1-\Delta}-\ii\eta|\geq |\sqrt{-\Delta}-\ii\eta|$, \eqref{eq. HS bound massless with L2 norm} holds, and we make the same choice of $\eta$ and $\gamma$ as in the massless case. A repetition of the above arguments yields that 
\begin{equation}
\log|g(w)|\leq 4\frac{\|V\|_1^2+\gamma^{-1}}{|1-w|^2|1+w|^2}
\end{equation}
and finally
\begin{equation}
\sum_{z\in\spd(H+V)}\frac{\dist(z,\sigma(H))|1-z^2|^{\frac{\tau}{2}}}{(1+|z|)^{2+\tau}}\leq C_{\tau}(1+\eta^2)^{2+\tau}(\|V\|_1^2+\gamma^{-1}).
\end{equation}
For $\|V\|_1\geq \rho_0/2$, we have that $\gamma^{-1}\leq 4(\gamma\rho_0^2)^{-1}\|V\|_1^2$. Hence, by the choice of $\eta$ and $\gamma$, this is the desired bound.
\end{proof}

\section{Applications}
\label{sec:appl}

\subsection{Damped wave equation in $L^2(\R)$}
\label{subsec:DWE}

Our firs non-self-adjoint application motivated from physics is the damped wave equation
\begin{equation}\label{DWE.int}
u_{tt}(t,x) + 2 a(x) u_t(t,x) = u_{xx}(t,x) -q_0 u(t,x), \quad t >0, \ x \in \R;
\end{equation} 
where the damping $a$ and the potential $q_0 \in \R_+$ satisfy 
\begin{equation}\label{a.q.def}
a(x) = a_0 + a_1(x), \quad a_0>0, \ a_1 \in L^1(\R) \cap L^2(\R), \quad q_0>a_0^2.
\end{equation}

The second order scalar equation \eqref{DWE.int} can be reformulated as a first order system, suitable for spectral analysis, in the form
\begin{equation}\label{DWE.G}
\partial_t 
\begin{pmatrix}
u \\ v
\end{pmatrix}
=
\underbrace{\begin{pmatrix}
-2 a & - \partial_x - q_0^\frac12 \\
- \partial_x + q_0^\frac12 & 0
\end{pmatrix}
}_{G}
\begin{pmatrix}
u \\ v
\end{pmatrix}.
\end{equation}
The equivalence to another (perhaps more intuitive) system with $v=u_t$ is extensively discussed \eg~in \cite{Gesztesy-2011-251}. We view $G$ as an operator in $L^2(\R;\C^2)$ and we employ a similarity transformation that brings $G$ to the form studied in Section~\ref{sec:1D}. Let
\begin{equation}\label{T.m.def}
T= - \frac{1}{2(\mu^2+\ii a_0 \mu)^\frac12}
\begin{pmatrix}
q_0^\frac12 & a_0 - \ii \mu 
\\
a_0 - \ii \mu & q_0^\frac12
\end{pmatrix},
\qquad \mu:=(q_0-a_0^2)^\frac12>0.
\end{equation}
Then a straightforward calculation yields
\begin{equation}\label{G.transf}
T \ii G T^{-1} = 
\underbrace{
\begin{pmatrix}
\mu & - \ii \partial_x
\\
- \ii \partial_x & -\mu 
\end{pmatrix}
}_{H}
+ 
\underbrace{
\frac{a_1(x)}{\mu} 
\begin{pmatrix}
-a_0 & q_0^\frac12
\\
- q_0^\frac12 & a_0
\end{pmatrix}
}_{V}
- \ii a_0 I_2.
\end{equation}
In the simplest case with $a_1=0$, \eqref{G.transf} immediately gives that
\begin{equation}
\sigma(G) = \se{3}(G) =  -a_0 + (-\ii \infty, -\ii \mu] \cup [\ii \mu, + \ii\infty),
\end{equation}
and thus illustrates the well-known spectral picture exhibiting the effect of damping (the shift of the spectrum to the left). As a corollary of the claims established or recalled in Section~\ref{sec:1D}, we obtain new results in the situation when damping is no longer constant but satisfies \eqref{a.q.def}. 

\begin{theorem}\label{thm:DWE}
Let $G$, $H$, $V$, $a$, $q_0$ and $\mu$ be as above. Then 
\begin{equation}
\se{3}(G)=-a_0 + (-\ii \infty, -\ii \mu] \cup [\ii \mu, + \ii\infty)
\end{equation}
and the following holds.
\begin{enumerate}[\upshape i)]
\item For any $\tau>0$, the Lieb-Thirring type inequality \eqref{LT mg0} and the bound on the number of eigenvalues \eqref{Number of eigenvalues mg0} hold with
\begin{equation}\label{V1.a1}
\|V\|_1= \sqrt{\frac{q_0^\frac12 + a_0}{q_0^\frac12 - a_0}} \int_{\R} |a_1(x)| \, \dd x,
\quad 
\|V\|_2= \sqrt{\frac{q_0^\frac12 + a_0}{q_0^\frac12 - a_0}} \left(\int_{\R} |a_1(x)|^2 \, \dd x\right)^{\frac 12}
\end{equation}
and with the replacement $H \mapsto \ii H$ and $m \mapsto \ii\mu$.
\item If $\|V\|_1<1$, then 
\begin{equation}\label{G.ind.b}
\spp(G) \setminus \se{3}(G) \subset \ov B_{\mu r_0}(\ii \mu x_0-a_0) \, \dot{\cup} \, \ov B_{\mu r_0}(-\ii \mu x_0 - a_0), 
\end{equation}
where $x_0$ and $r_0$ are as in \eqref{x0r0} with $\|V\|_1$ as in \eqref{V1.a1}.
\item In the weak coupling regime, \ie~$a_1$ is replaced everywhere by $\eps a_1$ with $\eps>0$: if $\int_{\R} a_1(x) \, \dd x >0$, then, for all sufficiently small $\eps>0$, there are two eigenvalues $z_\pm(\eps)$, $z_-=\ov{z_+}$, of $G$ satisfying
\begin{equation}
z_+(\eps) = -a_0 + \ii \mu - \frac{\ii \pd{a_0^2}}{2 \mu} \left(\int_\R a_1(x) \; \dd x \right)^2 \eps^2 + o(\eps^2), \quad \eps \to 0+.
\end{equation}
\end{enumerate}
\end{theorem}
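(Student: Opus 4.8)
The plan is to derive Theorem~\ref{thm:DWE} as a direct corollary of the results of Section~\ref{sec:1D} applied to the operator $H+V$ obtained from the similarity transformation \eqref{G.transf}. First I would record that \eqref{G.transf} gives $T\,\ii G\,T^{-1} = H + V - \ii a_0 I_2$ with $H$ the massive Dirac operator of mass $\mu>0$ and $V$ the rank-structured multiplication operator displayed there, so that $\sigma(\ii G) = \sigma(H+V) - \ii a_0$ and $\se{3}(\ii G) = \se{3}(H) - \ii a_0$ by the similarity invariance of the spectrum and of the Fredholm property, together with the stability of $\se{3}$ under the relatively compact perturbation $V$ (which holds since $a_1 \in L^1 \cap L^2$ forces $\|V\| \in L^1 \cap L^2$, the hypothesis of \eqref{V.def}). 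Multiplying back by $-\ii$ and using $\sigma(\ii T_0) = \ii\,\sigma(T_0)$ for any closed $T_0$ then yields $\se{3}(G) = -a_0 + \ii\,\se{3}(H) = -a_0 + \big((-\ii\infty,-\ii\mu] \cup [\ii\mu,+\ii\infty)\big)$, which is the first claim. I would also compute $\|V(x)\| $ explicitly: the matrix $\begin{pmatrix} -a_0 & q_0^{1/2} \\ -q_0^{1/2} & a_0 \end{pmatrix}$ has operator norm $(q_0 - a_0^2)^{1/2}\cdot\big(\tfrac{q_0^{1/2}+a_0}{q_0^{1/2}-a_0}\big)^{1/2}$ — this is a short $2\times 2$ singular-value computation using $\mu^2 = q_0 - a_0^2$ — whence the prefactor $\mu^{-1}$ in $V$ cancels the $\mu$ and one obtains \eqref{V1.a1} after integrating the $L^1$ and $L^2$ norms of $|a_1|$.

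Next, for part (i), I would observe that the similarity transformation and the shift by $-\ii a_0 I_2$ are both spectral-mapping operations that send eigenvalues of $H+V$ (with their algebraic multiplicities) bijectively to eigenvalues of $\ii G$, and hence eigenvalues $z$ of $G$ correspond to eigenvalues $\ii(z + a_0) = \ii z - \ii a_0 \mapsto$ ... more precisely $w := \ii(z+a_0) \in \spd(H+V)$ iff $z \in \spd(G)$. Applying Theorem~\ref{thm.LT} (massive case, with $m$ there equal to $\mu$) to $H+V$ and then substituting $w \mapsto \ii(z+a_0)$, noting $\dist(w,\sigma(H)) = \dist(z,\sigma(G))$, $|w| = |z+a_0|$, $|m^2 - w^2| = |\mu^2 + (z+a_0)^2|$, gives \eqref{LT mg0} and the counting bound \eqref{Number of eigenvalues mg0} with the indicated replacements $H \mapsto \ii H$, $m \mapsto \ii\mu$, and with $\|V\|_1$, $\|V\|_2$ as in \eqref{V1.a1}. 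For part (ii) I would do the analogous substitution into the spectral inclusion \eqref{spectral inclusion}: the two balls $\ov B_{\mu r_0}(\pm\mu x_0)$ in the $w$-plane are pulled back under $w = \ii(z+a_0)$, i.e.\ $z = -\ii w - a_0$, to the two balls $\ov B_{\mu r_0}(\mp\ii\mu x_0 - a_0)$ (the radius is preserved because multiplication by $-\ii$ is an isometry), which is exactly \eqref{G.ind.b}; one also invokes $\|V\|_1 < 1$ to license \eqref{spectral inclusion}.

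For part (iii), the weak-coupling statement, I would replace $a_1$ by $\eps a_1$ everywhere, so that $V$ becomes $\eps V$ with the same $V$, and apply Theorem~\ref{thm:1D.weak} to $H + \eps V$. The relevant matrix is $U = \int_\R V(x)\,dx = \tfrac{1}{\mu}\big(\int_\R a_1\big)\begin{pmatrix} -a_0 & q_0^{1/2} \\ -q_0^{1/2} & a_0 \end{pmatrix}$, so $U_{11} = -\tfrac{a_0}{\mu}\int_\R a_1$ and the hypothesis $\int_\R a_1 > 0$ is equivalent to $\Re U_{11} < 0$ (here $U_{11}$ is real). Theorem~\ref{thm:1D.weak} then produces an eigenvalue $w_+(\eps) = \mu - \tfrac{\mu}{2}U_{11}^2\eps^2 + o(\eps^2) = \mu - \tfrac{a_0^2}{2\mu}\big(\int_\R a_1\big)^2\eps^2 + o(\eps^2)$ of $H+\eps V$. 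Translating back via $z = -\ii w - a_0$ gives $z_+(\eps) = -a_0 + \ii\mu - \tfrac{\ii a_0^2}{2\mu}\big(\int_\R a_1\big)^2\eps^2 + o(\eps^2)$, as claimed; the conjugate eigenvalue $z_- = \ov{z_+}$ arises because $G$ has real coefficients (so its spectrum is symmetric about $\R$), or equivalently because the hypothesis $\Re U_{22} > 0$ holds simultaneously — indeed $U_{22} = +\tfrac{a_0}{\mu}\int_\R a_1 = -U_{11} > 0$ — and Theorem~\ref{thm:1D.weak} gives the matching $w_-(\eps) = -\mu + \tfrac{\mu}{2}U_{22}^2\eps^2 + o(\eps^2)$, which maps to $\ov{z_+(\eps)}$. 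The only genuinely non-routine points are the exact $2\times 2$ norm/singular-value computation yielding the constants in \eqref{V1.a1} and keeping careful track of how the affine spectral map $w \mapsto -\ii w - a_0$ transports each of the three statements of Section~\ref{sec:1D}; everything else is bookkeeping, so I expect the main (mild) obstacle to be verifying that the hypotheses of the cited theorems — in particular the relative $H$-compactness of $V$ needed for essential-spectrum stability and the smallness conditions $\|V\|_1 < 1$ — survive the transformation and are correctly re-expressed through \eqref{V1.a1}.
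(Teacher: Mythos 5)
Your proposal is correct and takes the route the paper clearly intends (the paper gives no separate proof of Theorem~\ref{thm:DWE}; it is presented as a corollary of Section~\ref{sec:1D} via the similarity \eqref{G.transf}). The computation of $\|V(x)\|$ via the singular values of the $2\times 2$ matrix, the stability of $\se{3}$, and the transport of the spectral inclusion and Lieb--Thirring bound through the affine isometry $w\mapsto -\ii w - a_0$ are all handled as the paper requires.

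One sign slip deserves note in part~(iii): with the correct map $z = -\ii w - a_0$, the eigenvalue $w_+(\eps) = \mu - \tfrac{a_0^2}{2\mu}\big(\int a_1\big)^2\eps^2 + o(\eps^2)$ of $H+\eps V$ near $+\mu$ is sent to $-a_0 - \ii\mu + \tfrac{\ii a_0^2}{2\mu}\big(\int a_1\big)^2\eps^2 + o(\eps^2)$, which is the theorem's $z_-(\eps)$, not $z_+(\eps)$; symmetrically, $w_-(\eps) = -\mu + \tfrac{a_0^2}{2\mu}\big(\int a_1\big)^2\eps^2 + o(\eps^2)$ maps to the theorem's $z_+(\eps) = -a_0 + \ii\mu - \tfrac{\ii a_0^2}{2\mu}\big(\int a_1\big)^2\eps^2 + o(\eps^2)$. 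You wrote the correct final formula for $z_+$ but paired it with the wrong source $w$. Since you correctly observe that both $\Re U_{11}<0$ and $\Re U_{22}>0$ hold simultaneously under $\int a_1>0$, both $w_\pm$ exist and both $z_\pm$ are recovered, so the conclusion is unaffected --- it is purely a labeling transposition, not a gap.
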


\subsection{Armchair graphene waveguides}
\label{subsec:ac}

As a second application motivated from physics, we consider the two-dimensional Dirac operator of an infinite straight graphene waveguide $\Omega=(-a,a)\times\R$,
\begin{equation}\label{D0.def}
D=
\begin{pmatrix}
0&\tau^*&0&0\\
\tau&0&0&0\\
0&0&0&-\tau\\
0&0&-\tau^*&0\\
\end{pmatrix}\quad \mbox{in  } L^2(\Omega;\C^4),
\end{equation}
where $\tau:=-\ii \partial_1+\partial_2$ and $\tau^*:=-\ii\partial_1 - \partial_2$ is the formal adjoint. The domain of $D$ consists of spinors $\psi\in H^1(\Omega;\C^4)$ satisfying so-called armchair boundary conditions 
\begin{equation}\label{boundary conditions}
\psi_i(-a,x_2)=\psi_{i+2}(-a,x_2),
\quad 
\psi_i(a,x_2)=e^{\ii\Theta}\psi_{i+2}(a,x_2), 
\quad i=1,2,
\end{equation}
where $0\leq\Theta<2\pi$ depends on the geometry of the waveguide. It was proved in \cite[Prop.~1,~19]{Freitas-2014-26} that $D$ is self-adjoint and
that the spectrum is given by
\begin{equation}
\sigma(D)=\se{3}(D)=(-\infty,-E_0]\cup[E_0,\infty),
\end{equation}
where
\begin{equation}\label{def. E0 mun}
E_0:=\min_{n\in\Z}|\xi_n|, \qquad \xi_n:=\frac{\pi n}{2a}-\frac{\Theta}{4a}.
\end{equation}
To simplify the presentation in the sequel, we restrict ourselves to the case when $\Theta \in (0,\pi)$ and thus $E_0 = -\xi_0>0$; the results can be extended in a straightforward way to the other cases.

Although the algebraic structure of Dirac waveguide operators is more complicated than in the Laplacian (or Schr\"odinger) case, it might be helpful to remark that the numbers $\{\xi_n\}_{n \in \Z}$ play the role of spectral thresholds in the essential spectrum of $D$. The corresponding (normalized in $L^2((-a,a);\C^4)$) transverse eigenfunctions read
\begin{equation}\label{Psi.n.def}
\Psi_n^+(x_1) = \frac{1}{2 a^\frac 12}
\begin{pmatrix}
e^{-\ii\xi_n x_1}\\ 
0 \\
(-1)^ne^{-\ii\frac{\Theta}{2}}e^{\ii\xi_n x_1}
\\
0
\end{pmatrix},
\ \ 
\Psi_n^-(x_1) = \frac{1}{2 a^\frac 12}
\begin{pmatrix}
0\\ 
e^{-\ii\xi_n x_1}\\
0\\
(-1)^n e^{-\ii\frac{\Theta}{2}} e^{\ii\xi_n x_1}
\end{pmatrix}
\end{equation}
and the set $\{\Psi_n^\sigma\}_{n \in \Z, \sigma\in\{+,-\}}$ forms an orthonormal basis in $L^2((-a,a);\C^4)$. To proceed with spectral analysis of perturbations of $D$, we derive a convenient representation of the resolvent of $D$ based on its decomposition into transverse modes. Moreover, we employ a unitary transformation bringing $D$ and its resolvent close to the form of the Dirac operator $H$ investigated in Section~\ref{sec:1D}. Notice that $-\xi_n$ plays the role of $m$ in previous formulas.

\begin{lemma}\label{lem:ac.res}
Let $D$ be as in \eqref{D0.def}. Then, for all $z \notin (-\infty,-E_0] \cup [E_0,\infty)$,
\begin{equation}\label{D0.res}
\Sigma (D-z)^{-1} \Sigma^{-1} 
=
\sum_{n \in \Z}
(H_n-z)^{-1}
P_n
=
\sum_{\substack{n \in \Z \\ \sigma \in \{+,-\}}}
(H_n-z)^{-1}
P_n^\sigma
\end{equation}
where
\begin{equation}\label{Sigma.def}
\Sigma = \frac 12
I_2 \otimes 
\begin{pmatrix}
1+\ii & 1+\ii  \\
-1+\ii & 1-\ii
\end{pmatrix},
\quad 
H_n = 
I_2 \otimes 
\begin{pmatrix}
-\xi_n  & - \ii \partial_2 \\ 
- \ii \partial_2 & \xi_n 
\end{pmatrix},
\quad
P_n = P_n^+ + P_n^-
\end{equation}
and, for every $f \in L^2(\Omega;\C^4)$,
\begin{equation}
(P_n^\sigma f)(x_1,x_2) = \langle \Sigma \Psi_n^\sigma, f(\cdot,x_2) \rangle_{L^2((-a,a);\C^4)} \Sigma \Psi_n^\sigma(x_1), \quad \sigma \in \{+,-\}.
\end{equation}
\end{lemma}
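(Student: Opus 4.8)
The plan is to prove Lemma~\ref{lem:ac.res} by a direct spectral decomposition of $D$ into transverse modes, exploiting the product structure of the waveguide $\Omega = (-a,a) \times \R$. First I would observe that the family $\{\Psi_n^\sigma\}_{n\in\Z,\,\sigma\in\{+,-\}}$ from \eqref{Psi.n.def} is an orthonormal basis of $L^2((-a,a);\C^4)$ adapted to the armchair boundary conditions \eqref{boundary conditions}, so that every $f \in L^2(\Omega;\C^4)$ can be expanded pointwise in $x_2$ as $f(\cdot,x_2) = \sum_{n,\sigma} f_n^\sigma(x_2)\,\Psi_n^\sigma(\cdot)$ with $f_n^\sigma \in L^2(\R)$, and $\|f\|^2 = \sum_{n,\sigma}\|f_n^\sigma\|_{L^2(\R)}^2$. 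The key algebraic computation is then to verify that $D$ acts on each transverse eigenfunction by differentiation in $x_2$ only: applying $\tau = -\ii\partial_1 + \partial_2$ and $\tau^* = -\ii\partial_1 - \partial_2$ to the exponentials $e^{\mp\ii\xi_n x_1}$ reproduces the factor $\pm\xi_n$ plus the $-\ii\partial_2$ term, so that $D$ restricted to the span of $\{\Psi_n^+,\Psi_n^-\}$ (tensored with functions of $x_2$) is unitarily equivalent, via $\Sigma$, to $H_n$ acting in $L^2(\R;\C^2)^{\oplus 2}$.

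The second step is to pin down the role of $\Sigma$: the matrix in \eqref{Sigma.def} is (up to normalization) unitary, and a short check shows that conjugating the $2\times 2$ blocks of $D$ that couple $\psi_i$ and $\psi_{i+2}$ by $\Sigma$ turns the off-diagonal differential operators $\tau,\tau^*$ with their $\partial_1$ parts into the standard Dirac form $\begin{pmatrix} -\xi_n & -\ii\partial_2 \\ -\ii\partial_2 & \xi_n\end{pmatrix}$ once the transverse variable has been integrated out. Concretely, I would show $\Sigma D \Sigma^{-1} \Psi = \sum_n H_n P_n \Sigma \Psi$ on the core $H^1(\Omega;\C^4)$ (with the boundary conditions), where $P_n^\sigma$ is the orthogonal projection onto the $n,\sigma$ transverse mode as defined. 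Since the $H_n$ are mutually orthogonal (they live on orthogonal subspaces indexed by $n,\sigma$), $\Sigma D \Sigma^{-1} = \bigoplus_{n,\sigma} H_n$ in the obvious sense, and this orthogonal-sum structure is preserved under taking resolvents: for $z$ outside $\sigma(D) = \bigcup_n \sigma(H_n) = (-\infty,-E_0]\cup[E_0,\infty)$ we get $(\Sigma D\Sigma^{-1} - z)^{-1} = \sum_{n,\sigma}(H_n-z)^{-1}P_n^\sigma$, which is exactly \eqref{D0.res} after noting $\Sigma(D-z)^{-1}\Sigma^{-1} = (\Sigma D\Sigma^{-1}-z)^{-1}$ and $P_n = P_n^+ + P_n^-$.

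The convergence of the series and its identification as a bounded operator follows because each $(H_n-z)^{-1}$ is bounded with norm $\dist(z,\sigma(H_n))^{-1} \le \dist(z,\sigma(D))^{-1}$ uniformly in $n$, and the $P_n^\sigma$ are orthogonal projections summing to the identity; hence the partial sums converge strongly and the limit agrees with the resolvent of the orthogonal sum. I expect the main obstacle to be the bookkeeping around the armchair boundary conditions \eqref{boundary conditions}: one must check that the transverse modes $\Psi_n^\sigma$ genuinely satisfy those conditions (this is where the phases $e^{-\ii\Theta/2}$ and the $(-1)^n$ factors enter) and that the domain of $D$ decomposes correspondingly, i.e.\ that $f \in \Dom(D)$ iff $f_n^\sigma \in W^{1,2}(\R)$ for all $n,\sigma$ with $\sum_n (1+\xi_n^2)\|f_n^\sigma\|^2 < \infty$; once this domain compatibility is in place, everything else is the routine functional calculus for orthogonal sums of operators. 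The self-adjointness and spectrum of $D$ quoted from \cite{Freitas-2014-26} make the domain identification legitimate, so the proof is essentially a verification rather than a construction.
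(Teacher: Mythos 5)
Your proposal is correct and essentially reproduces the paper's approach: compute $D$ on transverse modes $g(x_2)\Psi_n^\sigma(x_1)$, conjugate by $\Sigma$ to reveal $H_n$, and assemble the resolvent via the orthogonal decomposition into $\Ran P_n^\sigma$. The paper's proof is exactly this one-line computation followed by ``standard arguments,'' which your convergence and domain discussion fills in; note only the small slip that the intertwining relation should read $\Sigma D\Sigma^{-1}\Phi = \sum_{n,\sigma}H_nP_n^\sigma\Phi$, without the extra $\Sigma$ on the right, since $P_n^\sigma$ already projects onto $\lspan\{\Sigma\Psi_n^\sigma\}$.
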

\begin{proof}
Notice that, for any $g \in H^1(\R;\C)$, we have
\begin{equation}
\begin{aligned}
D  g(x_2) \Psi_n^\sigma(x_1) 
&=
I_2 \otimes 
\begin{pmatrix}
0 & - \partial_2 - \xi_n  \\ 
\partial_2 - \xi_n & 0 
\end{pmatrix}
g(x_2) \Psi_n^\sigma(x_1)
\\
&= 
\Sigma^{-1} H_n \Sigma
g(x_2) \Psi_n^\sigma(x_1).
\end{aligned}
\end{equation}
Thus \eqref{D0.res} follows by standard arguments.
\end{proof}

In the following, we investigate the spectrum of $D+V$ where
\begin{equation}\label{V.def.ac}
V: \Omega \to \C^{4 \times 4}: \quad \|V\| \in L^1(\Omega)\cap L^2(\Omega).
\end{equation}
To keep the connection to Section \ref{sec:1D}, in the proofs we always employ the unitary transformation $\Sigma$ and thus instead of $V$ we in fact work with
\begin{equation}\label{W.def}
W= \Sigma V \Sigma^{-1}.
\end{equation} 
\begin{remark}[On the assumption $\|V\| \in L^2(\Omega)$]\label{rem:ac.L2}
Similarly to the case of the one-dimensional Dirac operator, the condition $\|V\| \in L^2(\Omega)$ is imposed for convenience only. However, the situation is slightly different for the waveguide: We cannot drop the $L^2$ norm completely, but merely replace it (at the expense of using a more complicated definition of the sum $D+V$) by an $L^{1+\varepsilon}$ norm, where $\varepsilon>0$ is arbitrary. The $\varepsilon$ loss takes place in an orthogonality argument for estimating an infinite sum in the proof of Lemma \ref{lem:M12.est}. We do not know if this is just a technical issue.
\end{remark}

\begin{lemma}\label{lem.e3.ac}
Let $D$ be as in \eqref{D0.def} and $V$ as in \eqref{V.def.ac}. Then
\begin{equation}
\se{3}(D+V) = \se{3}(D) = (-\infty,-E_0]\cup[E_0,\infty).
\end{equation}
\end{lemma}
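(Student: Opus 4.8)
The plan is to prove the essential spectrum stability by showing that $V$ (equivalently $W=\Sigma V\Sigma^{-1}$) is relatively compact with respect to $D$, and then invoke the standard Weyl-type theorem that relative compactness preserves $\se{3}$ (\cite[Sec.~IX]{EE}). Since $\Sigma$ is unitary and bounded with bounded inverse, it suffices to work with $W$ and $\Sigma D\Sigma^{-1}$; moreover, because the operator norm $\|W(x)\|=\|V(x)\|$ and $\|V\|\in L^1(\Omega)\cap L^2(\Omega)$, the claim reduces to showing that $W(D-z)^{-1}$ (for one, hence any, $z\in\rho(D)$) is a compact operator on $L^2(\Omega;\C^4)$.

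The key tool is the resolvent decomposition from Lemma~\ref{lem:ac.res}: writing $z=\ii\eta$ with $\eta>0$ large, we have $\Sigma(D-\ii\eta)^{-1}\Sigma^{-1}=\sum_{n\in\Z}(H_n-\ii\eta)^{-1}P_n$, where each $H_n$ is (up to the trivial tensor factor $I_2$) a one-dimensional free Dirac operator with mass $|\xi_n|$. First I would establish compactness of each summand $W\,\Sigma^{-1}(H_n-\ii\eta)^{-1}P_n\Sigma$: this is a one-dimensional Birman–Schwinger–type estimate, and compactness of $|W|^{1/2}(H_n-\ii\eta)^{-1}$ (in fact Hilbert–Schmidt membership, following the kernel bounds \eqref{H.res.ker} together with $\|V\|\in L^1\cap L^2$) is exactly the kind of statement recalled from \cite{Cuenin-2014-15}; the transverse projection $P_n$ is bounded and contributes a controlled factor. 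Second, I would show the tail $\sum_{|n|>N}$ converges to zero in operator norm (uniformly as $N\to\infty$): here one uses that for large $|n|$ the ``mass'' $|\xi_n|$ grows linearly in $n$, so $\|(H_n-\ii\eta)^{-1}\|\leq(|\xi_n|^2+\eta^2)^{-1/2}\lesssim |n|^{-1}$, and together with the orthogonality of the $P_n$ (the $\{\Sigma\Psi_n^\sigma\}$ form an orthonormal family in the transverse variable) one gets a summable bound; combining this with the $L^1\cap L^2$ control on $W$ as in the estimates of Remark~\ref{rem:ac.L2} yields an absolutely convergent series of compact operators, hence a compact operator.

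Having established that $W(\Sigma D\Sigma^{-1}-\ii\eta)^{-1}$ is compact, relative $D$-compactness of $V$ follows, so $\se{3}(D+V)=\se{3}(D)$; conjugating back by $\Sigma^{-1}$ and using Lemma~\ref{lem:ac.res} (or rather the remark after \eqref{D0.def} identifying $\se{3}(D)$) gives the stated equality $\se{3}(D+V)=(-\infty,-E_0]\cup[E_0,\infty)$.

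The main obstacle is the second step: controlling the infinite sum over transverse modes in a way that genuinely gives compactness (not merely boundedness) of the perturbed resolvent. One must be careful that the Hilbert–Schmidt norms of the individual pieces $W\,(\cdots)\,P_n$ are summable in $n$ — which is where the linear growth of $|\xi_n|$ and the orthogonality argument interact, and precisely where (per Remark~\ref{rem:ac.L2}) the $L^2$ rather than merely $L^{1+\varepsilon}$ hypothesis on $\|V\|$ buys a clean proof. Everything else — unitarity of $\Sigma$, the one-dimensional free resolvent kernel bounds, and the abstract stability theorem — is routine.
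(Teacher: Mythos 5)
Your overall strategy matches the paper's exactly: reduce to relative $D$-compactness of $V$ and invoke the Weyl-type stability of $\se{3}$; conjugate by the unitary $\Sigma$; use the transverse-mode resolvent decomposition $\Sigma(D-z)^{-1}\Sigma^{-1}=\sum_n(H_n-z)^{-1}P_n$; establish Hilbert--Schmidt membership of each summand via the kernel \eqref{H.res.ker} and the $L^2$ hypothesis; and then argue that the tail of the mode sum vanishes in operator norm. (The paper takes $z=0$ rather than $z=\ii\eta$, which is immaterial.)

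The genuine gap is in the tail estimate, which is exactly the hard part. Your claim that $\|(H_n-\ii\eta)^{-1}\|\lesssim|n|^{-1}$ together with the orthogonality of the $P_n$ ``gives a summable bound'' and an ``absolutely convergent series'' does not hold as stated. The per-term estimate one actually gets is $\|WH_n^{-1}P_n\|_{\mathfrak S^2}\lesssim(1+|n|)^{-1/2}\|W\|_2$, which is not $\ell^1$ in $n$; and the orthogonality-of-ranges bound $\|\sum_n T_n\|\leq(\sum_n\|T_n\|^2)^{1/2}$ applied to the adjoints still yields the divergent series $\sum_n(1+|n|)^{-1}$. You also cannot simply insert the operator-norm bound $\|H_n^{-1}\|\lesssim|n|^{-1}$ because $W$ is an unbounded multiplication operator, so $\|WH_n^{-1}P_n\|\leq\|W\|_\infty\|H_n^{-1}\|$ is unavailable. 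The paper gets around this with an additional device your sketch misses: it truncates $W$ to bounded potentials $W_n:=W\mathbf 1_{\{\|W\|\leq C_n\}}$ with $\|W-W_n\|_2\leq 2^{-n}$, splits $\sum_{|n|>N}WH_n^{-1}P_n$ into the two pieces involving $W_n$ and $W-W_n$, bounds the first piece (now with bounded multipliers, for which orthogonality of $\Ran P_n$ can be exploited via the adjoint $\sum P_nH_n^{-1}W_n^*$) by $CN^{-1/2}\|W\|_2$, and bounds the second piece by the geometric decay $\sum_{|n|>N}2^{-n}$ via the Hilbert--Schmidt estimate. The result is an operator-norm bound tending to zero as $N\to\infty$, not an absolutely convergent series of per-term norms. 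You need to supply this truncation argument (or an equivalent device) to make the convergence step rigorous; it is the one nontrivial ingredient of the proof.
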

\begin{proof}
We show $V$ is relatively compact with respect to $D$, so the essential spectrum $\se{3}$ is preserved, see \cite[Thm.~IX.2.1]{EE}. In the following, we employ the unitary transformation $\Sigma$.

First, using the explicit kernel \eqref{H.res.ker} with \pd{$z=0$ and $m=-\xi_n$ and $\|W\| \in L^2(\Omega)$, we  verify that $W H_n^{-1}P_n$ is a Hilbert-Schmidt operator with 
\begin{equation}
\|W  H_n^{-1} P_n\|^2_{\mathfrak{S}^2}\leq C(1+|n|)^{-1}\|W\|_2^2. 
\end{equation}	
}
We will show that the series 
\begin{equation}\label{VHn.ser}
\sum_{n \in \Z} W H_n^{-1} P_n,
\end{equation}
having Hilbert-Schmidt operators as summands, is convergent in the operator norm; this will imply that $V D^{-1}$ is compact.

To show the convergence of \eqref{VHn.ser}, we approximate $W$ by bounded potentials $W_n$ defined by
\begin{equation}
W_n(x):=\begin{cases}
W(x)\quad &\mbox{if  }\|W(x)\|\leq C_n,\\
0\quad &\mbox{if  }\|W(x)\|> C_n,
\end{cases}
\end{equation}
where $C_n$ is chosen such that $\|W\mathbf{1}_{\{x:\|W(x)\|> C_n\}}\|_2\leq 2^{-n}$; this is possible e.g.\ by Chebychev's inequality. In summary, we have chosen $W_n$ such that
\begin{equation}
\|W-W_n\|_2\leq 2^{-n},\quad \|W_n\|_2\leq \|W\|_2.
\end{equation} 
By the mutual orthogonality of $\Ran P_n $, we have for any $N\in\N$ with $N\geq 1$ that
\begin{equation}
\Big\|\sum_{|n|>N}
W_n H_n^{-1}P_n  \Big \|
=
\Big \| \sum_{|n|>N}
P_n H_n^{-1} W_n^* \Big \| 
\leq C\pd{N^{-\frac 12}}\|W\|_2.
\end{equation}
On the other hand, we have
\begin{equation}
\Big \|\sum_{|n|>N}
(W-W_n) H_n^{-1}P_n \Big \| 
\leq C\sum_{|n|>N}2^{-n}.
\end{equation}
The claim is proved.
\end{proof}

\subsubsection{Weakly coupled eigenvalues in armchair waveguides}

We analyze the eigenvalues of $D+ \eps V$ emerging from the edges of the essential spectrum and their asymptotics as $\eps \to 0+$. Here $V$ is assumed to satisfy \eqref{V.def.ac} and thus $\se{3}(D+\eps V) =\se{3}(D)$ for every $\eps \geq 0$ by Lemma~\ref{lem:ac.res}. As in the one-dimensional case, the main tool is the Birman-Schwinger principle. To be able to use the formulas from the one-dimensional case, we employ the unitary transformation $\Sigma$, see \eqref{Sigma.def}, and representation of the resolvent of $D$ in \eqref{D0.res}.

By standard arguments, it can be verified that $z \in \spp(D+ \eps V)$ if and only if $-1 \in \spp( \eps Q)$, where the Birman-Schwinger operator $Q$ has the form
\begin{equation}
Q(z) = \ov{A \Sigma (D-z)^{-1} \Sigma^{-1} B},
\end{equation}
(the bar denotes the closure) and where $A$, $B$ are multiplication operators by the matrices $\cA$ and $\cB$ stemming from the polar decomposition of $W$, \ie~
\begin{equation}\label{W.AB.dec}
W = \cB \cA, \quad \cB:=U_W |W|^\frac 12, \quad  \cA:=|W|^\frac 12.
\end{equation}

We further decompose $Q$ into a singular and regular part, namely $Q = L + M$. All these integral operators have explicit kernels; nonetheless, we display here in detail only the formula for $L$. After straightforward manipulations employing the formulas \eqref{H.res.ker} and \eqref{Psi.n.def}, we get 
\begin{equation}
(L \Psi)(x) =  
\cA(x) \Upsilon_2
\sum_{\sigma \in \{+,-\}}
\int_\Omega \langle \Psi_0^\sigma(y_1), \cB(y) \Psi(y) \rangle_{\C^4} \, \dd y \, \pd {\psi}_0^\sigma,
\end{equation}
where (with $\Upsilon$ as in \eqref{Ups.def})
\begin{equation}\label{Ups.def.ac}
\Upsilon_2= \Upsilon_2(\zeta_0(z))
=
I_2 \otimes  \Upsilon(\zeta_0(z)),
\quad \zeta_0(z) = \frac{z - \xi_0}{k_0(z)}, \quad k_0(z) = (z^2-\xi_0^2)^\frac12
\end{equation}
and 
\begin{equation}
\pd{\psi}_0^+=\frac{4 a^\frac 12 \sin \frac \Theta 4}{\Theta}
\begin{pmatrix}
1 \\ 0 \\ e^{-\ii \frac \Theta 2 } \\ 0
\end{pmatrix},
\qquad
\pd{\psi}_0^-=\frac{4 a^\frac 12 \sin \frac \Theta 4}{\Theta}
\begin{pmatrix}
0 \\ 1 \\ 0 \\ e^{-\ii \frac \Theta 2 }
\end{pmatrix}.
\end{equation}
For the regular part $M=M_1+M_2$, we have 
\begin{equation}\label{M12.def}
M_1 = A (H_0-z)^{-1} P_0 B - L, \qquad 
M_2 = A 
\Big(\sum_{n \in \Z\setminus \{0\}}
(H_n-z)^{-1} P_n \Big) B .
\end{equation}

\begin{lemma}\label{lem:M12.est}
Let $V$ be as in \eqref{V.def.ac} and $M_1$, $M_2$ be as in \eqref{M12.def}. Then
\begin{equation}
\|M_1\| = o(\|\Upsilon\|), \qquad \|M_2\| = \BigO(1), \quad z \to \pm \xi_0.
\end{equation}
\end{lemma}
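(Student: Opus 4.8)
The plan is to handle the two summands separately: $M_1$ by repeating almost verbatim the argument of Lemma~\ref{lem:M.HS}, and $M_2$ by the orthogonality/truncation argument used in the proof of Lemma~\ref{lem.e3.ac}, exploiting the fact that the edges $\pm\xi_0$ lie in the spectral gap of all the remaining transverse modes.

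For $M_1$ I would first unwind the definition. After the $\Sigma$-conjugation and the modal decomposition \eqref{D0.res}, the operator $A(H_0-z)^{-1}P_0 B$ is an integral operator on $L^2(\Omega;\C^4)$ whose kernel factorizes as $\cA(x)$ times the (bounded) transverse kernel $\sum_{\sigma}\Sigma\Psi_0^\sigma(x_1)(\Sigma\Psi_0^\sigma(y_1))^{*}$ times the one-dimensional Dirac resolvent kernel $\cN_0(x_2,y_2;z)\e^{\ii k_0(z)|x_2-y_2|}$ in the longitudinal variable (with $k_0,\zeta_0$ as in \eqref{Ups.def.ac}) times $\cB(y)$. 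Subtracting $L$ removes exactly the $\sgn$-free, exponential-free part $\Upsilon_2$, so the kernel $\cM_1$ has the same shape as \eqref{M.def}: a combination of $\sgn(x_2-y_2)\sigma_1\e^{\ii k_0(z)|x_2-y_2|}$ and $\Upsilon_2(\e^{\ii k_0(z)|x_2-y_2|}-1)$, conjugated by $\cA(x)$, $\cB(y)$ and the bounded transverse profiles. Using $\Im k_0(z)>0$ for $z\notin\sigma(D)$, \eqref{AB.norm} for $W$, and the boundedness of $\Sigma\Psi_0^\pm$ on $(-a,a)$, one gets, exactly as in Lemma~\ref{lem:M.HS}, the pointwise bound $\|\Upsilon\|^{-2}\|\cM_1(x,y)\|^2\le C\|W(x)\|\,\|W(y)\|(\|\Upsilon\|^{-2}+1)$, which for $\|\Upsilon\|>1$ has the $L^1(\Omega\times\Omega)$ majorant $C\|W(x)\|\,\|W(y)\|$ since $\|W\|\in L^1(\Omega)$. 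As $z\to\pm\xi_0$ we have $k_0(z)\to 0$, hence $\|\Upsilon\|\to\infty$ and $\e^{\ii k_0(z)|x_2-y_2|}-1\to 0$ pointwise; dominated convergence then gives $\|\Upsilon\|^{-2}\|M_1\|_{\mathfrak S^2}^2\to 0$, i.e.\ $\|M_1\|\le\|M_1\|_{\mathfrak S^2}=o(\|\Upsilon\|)$.

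For $M_2$ the structural point is that, since $\Theta\in(0,\pi)$, one has $E_0=|\xi_0|<\xi_1=\min_{n\neq 0}|\xi_n|$, so there is a fixed neighbourhood of $\{\pm\xi_0\}$ and a constant $c>0$ with $\dist(z,\sigma(H_n))\ge c(1+|n|)$ for all $n\neq 0$ and all $z$ in this neighbourhood; consequently $\Im k_n(z)$ is bounded below and $\|\Upsilon(\zeta_n(z))\|$ is bounded, both uniformly in $n\neq 0$ and in such $z$. With these uniform bounds the Hilbert--Schmidt estimate from the proof of Lemma~\ref{lem.e3.ac} applies with $z$ in place of $0$, giving $\|A(H_n-z)^{-1}P_nB\|_{\mathfrak S^2}^2\le C(1+|n|)^{-1}\|W\|_2^2$, and then the same truncation of $W$ together with the mutual orthogonality of the ranges $\Ran P_n$ shows that the series \eqref{M12.def} for $M_2$ converges in operator norm with a bound independent of $z$ near $\pm\xi_0$; hence $\|M_2\|=\BigO(1)$ (in fact $M_2(z)$ extends continuously to the edges). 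I expect the $M_2$ part to be the real obstacle: although $M_2$ is morally $A$ times the resolvent with the zeroth mode removed — which is norm-bounded near $\pm\xi_0$ precisely because $E_0<\xi_1$ — the factors $A=B^{*}=|W|^{1/2}$ are unbounded, so one genuinely needs the smoothing of each $(H_n-z)^{-1}$ and the orthogonality of the $\Ran P_n$ to sum the tail uniformly, and this is exactly the place where the $L^2$ (or $L^{1+\varepsilon}$) hypothesis on $\|V\|$ is used, cf.\ Remark~\ref{rem:ac.L2}. The $M_1$ part, by contrast, is routine once its kernel is recognized to have the form \eqref{M.def}.
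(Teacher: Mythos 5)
Your treatment of $M_1$ is fine and matches the paper, which disposes of it by saying the estimate is ``almost the same as in the proof of Lemma~\ref{lem:M.HS}'' and omitting the details; you have spelled those details out correctly. You also correctly identify the two structural facts underlying the $M_2$ estimate: the thresholds $\pm\xi_n$ with $n\neq 0$ stay uniformly away from $\pm\xi_0$, and each summand $A(H_n-z)^{-1}P_nB$ is Hilbert--Schmidt with norm $\lesssim(1+|n|)^{-1/2}\|W\|_2$. However, your proposed way of summing the tail of $M_2$ has a genuine gap, and you have not quite located where the difficulty lives.

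You propose to transplant the truncation-plus-orthogonality argument from Lemma~\ref{lem.e3.ac}. That argument works there because the summands are of the \emph{one-sided} form $WH_n^{-1}P_n$: taking adjoints gives $P_nH_n^{-1}W^*$, whose ranges lie in the mutually orthogonal subspaces $\Ran P_n$, and this is what makes the Pythagoras identity on partial sums available. The summands of $M_2$ are the \emph{two-sided} operators $A(H_n-z)^{-1}P_nB$, which carry $A$ on the left and $B$ on the right. Their ranges lie in $\Ran A$, and the ranges of their adjoints $B^*P_n(H_n-\bar z)^{-1}A^*$ lie in $\Ran B^*$ --- in neither case in $\Ran P_n$ --- so the mutual orthogonality of the $\Ran P_n$ does \emph{not} propagate to the summands or to their adjoints, and the Lemma~\ref{lem.e3.ac} argument breaks down precisely at the step you are trying to reuse. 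Simply summing the Hilbert--Schmidt norms also fails, since $\sum_n(1+|n|)^{-1/2}$ diverges; some form of orthogonality is genuinely needed, which is also why the $L^2$ (rather than merely $L^{1+\varepsilon}$) hypothesis gives a clean statement (cf.\ Remark~\ref{rem:ac.L2}).

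The paper routes around the two-sided obstruction by working on the level of the quadratic form: it bounds $\sum_{|n|\geq 1}|\langle Af,(H_n-z)^{-1}P_nBg\rangle|$ by first using the polar decomposition $(H_n-z)^{-1}=|H_n-z|^{-1/2}U_n|H_n-z|^{-1/2}$ and Cauchy--Schwarz term-by-term, which produces the factors $\||H_n-z|^{-1/2}P_nAf\|$ and $\||H_n-z|^{-1/2}P_nBg\|$ in which the projection $P_n$ now sits next to $|H_n-z|^{-1/2}$, so Bessel's inequality can be exploited on each side separately. The remaining scalar quantity $\|w^{1/2}\,|(-\partial_2^2+\xi_n^2)^{1/2}-z|^{-1/2}\|_{\mathfrak{S}^4(L^2(\R))}$, with $w$ the transverse $L^2$-profile of $W$, is then estimated uniformly in $n$ and in $z$ near $\pm\xi_0$ by the Kato--Seiler--Simon inequality. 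It is this Schwarz/Bessel/Kato--Seiler--Simon chain, rather than the Lemma~\ref{lem.e3.ac} truncation, that actually delivers $\|M_2\|=\BigO(1)$; your proposal needs to be rerouted through this (or an equivalent quadratic-form) argument.
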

\begin{proof}
The estimate of $\|M_1\|$ is almost the same as in the proof of Lemma~\ref{lem:M.HS}, we omit the details. 

To prove the estimate for $\|M_2\|$
let $f,g\in C_c^{\infty}(\Omega;\C^4)$, normalized in $L^2(\Omega;\C^4)$ and assume that $|z-|\xi_0||<1/2$. From formula~\eqref{Psi.n.def} it is straightforward to obtain the estimate
\begin{align}\label{bound on PnA}
\left(\int_{-a}^{a}
\|AP_n^{\sigma}f(x_1,x_2)\|_{\C^4}^2
\dd x_1\right)^{\frac 12}
\leq w(x_2)^{\frac 12}\left(\int_{-a}^a\|f(x_1,x_2)\|_{\C^4}^2\dd x_1\right)^{\frac 12}
\end{align}
for all $n\in\Z$, all $\sigma\in\{+,-\}$ and for almost all $x_2\in\R$, where
\begin{equation}
w(x_2):=\left(\int_{-a}^a\|W(x_1,x_2)\|^2\dd x_1\right)^{\frac 12}.
\end{equation}
An analogue of this inequality holds for $A,f$ replaced by $B,g$. By Schwarz's and Bessel's inequality, 
\begin{align}
&\sum_{|n|\geq 1}
\left|
\langle Af,(H_n-z)^{-1} P_n Bg\rangle\right|\\
&\leq 
\Big(\sum_{|n|\geq 1}
\||H_n-z|^{-\frac 12}P_n  Af\|^2\Big)^{\frac 12}
\Big(\sum_{|n|\geq 1}
\| |H_n-z|^{-1/2} P_n  Bg\|^2\Big)^{1/2}\\
&\leq \sup_{|n|\geq 1} \||AP_n|(-\partial_2^2+\xi_n^2)^{\frac 12}\otimes \pd{I_4} -z|^{-\frac 12}\|\,\|BP_n|(-\partial_2^2+\xi_n^2)^{\frac 12}\otimes \pd{I_4}-z|^{-\frac 12}\|
\\
&\leq 4 \sup_{|n|\geq 1} \|w^{1/2}|(-\partial_2^2+\xi_n^2)^{\frac 12}-z|^{-\frac 12}\|_{\mathfrak{S}^4(L^2(\R))}^2
\\
&\leq 4(2\pi)^{-\frac 14}\sup_{|n|\geq 1}\sup_{|z-|\xi_0||<1/2}\left(\int_{\R}|(\eta^2+\xi_n^2)^{\frac 12}-z|^{-2}\dd \eta\right)^{\frac 14}\|W\|_2,
\end{align}
where we used \eqref{bound on PnA} in the next-to-last inequality and the Kato-Seiler-Simon inequality, see~\cite[Thm.~4.1]{Simon-2005}, in the last inequality. The factor $4$ comes from estimating the matrix operator $H_n$ in terms of the scalar operator $(-\partial_2^2+\xi_n^2)^{\frac 12}$. The supremum over $n$ and $z$ in the final expression is finite.
\end{proof}

Having established suitable estimates of the regular part $M$, we prove the main result of this section.

\begin{theorem}\label{thm:ac.weak}
Let $V$ be as in \eqref{V.def.ac} and let $W= \Sigma V \Sigma^{-1}$ with $\Sigma$ as in \eqref{Sigma.def}. Define the matrices
\begin{equation}\label{U0.def.ac}
U_{ij}:= \sum_{k=1}^4 \sum_{\sigma \in \{+,-\}} \int_{\Omega} (\ov{\Psi_0^\sigma(x_1)})_k W_{kj}(x)  \, \dd x \, (\pd{\psi}_0^\sigma)_i.
\end{equation}
and
\begin{equation}
U^+:=
\begin{pmatrix}
U_{11} & U_{13}
\\
U_{31} & U_{33}
\end{pmatrix},
\qquad 
U^-:=
\begin{pmatrix}
U_{22} & U_{24}
\\
U_{42} & U_{44}
\end{pmatrix}.
\end{equation}

If $u^+ \in \C$ is a solution of $\det(u^+ U^+ + I_2)=0$ that satisfies $\Re u^+ >0$, then for any sufficiently small $\eps >0$, there exists an eigenvalue $z_+(\eps)$ of $D + \eps V$ satisfying
\begin{equation}\label{z+.exp.ac}
z_+(\eps) = -\xi_0 + \frac{\xi_0}{2(u^+)^2}  \eps^2 + o(\eps^2), \quad \eps \to 0+.
\end{equation}
Similarly, if $u^- \in \C$ is a solution of $\det(u^- U^- + I_2)=0$ that satisfies $\Re u^- <0$, then for any sufficiently small $\eps >0$, there exists an eigenvalue $z_-(\eps)$ of $D + \eps V$ satisfying
\begin{equation}\label{z-.exp.ac}
z_-(\eps) = \xi_0 - \frac{\xi_0}{2} (u^- )^2 \eps^2 + o(\eps^2), \quad \eps \to 0+.
\end{equation}
\end{theorem}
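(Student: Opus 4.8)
The plan is to follow the same Birman-Schwinger scheme used in the proof of Theorem~\ref{thm:1D.weak}, transported to the waveguide setting via the unitary $\Sigma$ and the transverse-mode decomposition \eqref{D0.res}. By the Birman-Schwinger principle, $z \in \spp(D+\eps V)$ iff $-1 \in \spp(\eps Q(z))$ with $Q = L+M$, $M = M_1 + M_2$. Whenever $\eps\|M\|<1$ we factor $I + \eps Q = (I+\eps M)(I + (I+\eps M)^{-1}\eps L)$, so the eigenvalue condition becomes the vanishing of a finite-rank determinant. The key structural point is that $L$ has separated kernel with \emph{two-dimensional} range spanned by $\cA(x)\Upsilon_2\psi_0^+$ and $\cA(x)\Upsilon_2\psi_0^-$ (the single threshold $\xi_0$ carries the two channels $\sigma=\pm$), so $(I+\eps M)^{-1}L$ has rank at most $2$ and, writing $(I+\eps M)^{-1}L = \widehat A_\eps \Upsilon_2 \widehat B$ exactly as in \eqref{L.AB.hat}--\eqref{AB.hat.det}, the condition $-1\in\spp((I+\eps M)^{-1}\eps L)$ reduces to $\det(I_2 + \eps \Upsilon(\zeta_0)(U^{+} + U_1^{+})) = 0$ on the $\sigma=+$ channel (and analogously with $U^-$ on the $\sigma=-$ channel), where $U_1^\pm$ is an error with $\|U_1^\pm\|\le r\|W\|_1$, $r = \eps\|M\|/(1-\eps\|M\|)$. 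Here one uses $\widehat B\widehat A_\eps = U + U_1$ with $U$ the matrix \eqref{U0.def.ac}; the block structure of $\Upsilon_2 = I_2\otimes\Upsilon(\zeta_0)$ decouples the determinant into the two $2\times2$ blocks $U^\pm$ of \eqref{U0.def.ac}.

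First I would record, from Lemma~\ref{lem:M12.est}, that $\|M\| = \|M_1\| + \|M_2\| = o(\|\Upsilon\|) + \BigO(1)$ as $z\to\pm\xi_0$; since $\|\Upsilon\|\to\infty$ there, $\|M\| = o(\|\Upsilon\|)$ still holds, and this is the only property of $M$ that the argument needs (as noted in Remark~\ref{rem:wc}~ii)). Next I would set up the leading-order guess: on the $+$ block the equation is $0 = \det(I_2 + \eps\Upsilon(\zeta_0)(U^+ + U_1^+))$, and since $\Upsilon(\zeta_0) = \tfrac{\ii}{2}\diag(\zeta_0,\zeta_0^{-1})$ blows up like $\zeta_0$, the dominant balance forces $\tfrac{\ii}{2}\eps\zeta_0 \,U^+$ to be of size $1$; more precisely, writing $\zeta_0 = \tfrac{2\ii}{\eps}\,\widetilde u$ makes the leading determinant $\det(I_2 - \widetilde u^{-1}U^{+}) \cdot(\text{bounded})$, wait — cleaner is to observe that after dividing out the large factor the equation becomes $\det(u^{+}U^{+} + I_2) + o(1) = 0$ with $u^{+} := \tfrac{\ii}{2}\eps\zeta_0$ (up to the lower-right entry $\zeta_0^{-1}$, which tends to $0$ and contributes only to the $o(1)$), so the zeroth-order root is exactly a root $u^+$ of $\det(u^+U^+ + I_2)=0$. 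The constraint $\Im\zeta_0 < 0$ (the admissible half-plane in the $\zeta_0$ variable, cf.\ \cite{Cuenin-2014-79} and the role of $-\xi_0$ as $m$) translates into $\Re u^+ > 0$, which is precisely the hypothesis; similarly the $-$ block gives $u^- = -\tfrac{\ii}{2}\eps\zeta_0$ with the sign-flip producing $\Re u^- < 0$.

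Then I would run the Rouch\'e argument verbatim as in Theorem~\ref{thm:1D.weak}: fix a zero $u^+$ of $\det(u^+U^+ + I_2) = 0$ with $\Re u^+ > 0$ that is \emph{simple} (a generic perturbation argument, or one restricts to simple roots), set $\zeta_\alpha := \zeta_0^{+}(1+\alpha)$ with $\zeta_0^{+}$ chosen so that $u^+ = \tfrac{\ii}{2}\eps\zeta_0^{+}$, choose $\delta_0$ small enough that $\Im\zeta_\alpha < -\beta < 0$ uniformly for $|\alpha|<\delta_0$ and $\eps\to0+$ (possible since $\Re u^+>0$), note $\zeta_\alpha\to\infty$ hence the corresponding $z_\alpha \in\rho(D)$ with $z_\alpha\to-\xi_0$, and check $\eps\|M\| = o(\eps|\zeta_0^+|) = o(1)$ so that $r = o(1)$ and the factorization \eqref{Q.decom} is justified. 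The holomorphic function $F(\alpha) := \det(I_2 + \Upsilon_2\text{-block}(U^+ + U_1^+))$ then satisfies $F(\alpha) = \det(u^+(1+\alpha)U^+ + I_2) + o(1) = (1+\alpha)\,[\text{something}] \cdots$ — more simply, by simplicity of the root, $F(\alpha) = c\alpha + o(1)$ with $c\ne0$, so Rouch\'e gives a zero $\zeta_+ = \zeta_0^+(1+o(1))$, i.e.\ $u^+(\eps) = u^+(1+o(1))$. Finally I would invert the relation \eqref{Ups.def.ac} between $\zeta_0$ and $z$: from $\zeta_0 = (z-\xi_0)/(z^2-\xi_0^2)^{1/2} = ((z-\xi_0)/(z+\xi_0))^{1/2}$ one gets $z + \xi_0 \sim 2\xi_0/\zeta_0^2 = 2\xi_0(\tfrac{\ii}{2}\eps/u^+)^2\cdot(-1)^{-1}$; book-keeping the constants yields $z_+(\eps) = -\xi_0 + \tfrac{\xi_0}{2(u^+)^2}\eps^2 + o(\eps^2)$, and the mirror computation on the $-$ block gives \eqref{z-.exp.ac}.

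The main obstacle is twofold. The conceptual one is getting the block decoupling and the correct normalization bookkeeping right: one must verify that $\Upsilon_2 = I_2\otimes\Upsilon(\zeta_0)$ genuinely splits the $4\times4$ determinant \eqref{detU.eq}-analogue into the product of the two $2\times2$ determinants built from $U^\pm$ (this rests on the specific index pattern in \eqref{U0.def.ac}, namely that rows/columns $1,3$ feed the $+$ channel and $2,4$ the $-$ channel), and that after extracting the divergent $\zeta_0$-scaling the surviving equation is exactly $\det(uU^\pm + I_2)=0$ rather than something with extra $\zeta_0^{-1}$ contamination — the point being that those $\zeta_0^{-1}$ entries vanish in the limit and only affect the $o(1)$. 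The technical one is identifying the admissibility region: one must confirm that, with $-\xi_0$ playing the role of $m$, the eigenvalue must lie in $\rho(D)$ with $\Im k_0(z) > 0$, which is the region $\Im\zeta_0 < 0$, and that this is equivalent to $\Re u^\pm \gtrless 0$ with the stated signs — exactly the one-dimensional dichotomy $\Re U_{11}<0$ vs.\ $\Re U_{22}>0$ transported through the $\Sigma$-conjugation. Everything else (the dominated-convergence estimate for $\|M_1\|$, the Schatten bounds for $\|M_2\|$) is already supplied by Lemma~\ref{lem:M12.est}, and the Rouch\'e step is mechanically identical to the one-dimensional proof.
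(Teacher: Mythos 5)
Your proposal follows the same Birman--Schwinger/Rouch\'e template as the paper's proof, which itself just says ``the strategy and individual steps are the same as in the one-dimensional case'' and refers to Lemma~\ref{lem:M12.est}. The paper keeps the eigenvalue condition in the form $\det(I_4+\eps\Upsilon_2(U+U_1))=0$ and identifies the leading-order equations $\det(u^\pm U^\pm+I_2)=0$ by a Laplace expansion of this $4\times4$ determinant; you phrase this as a ``decoupling'' into the two $2\times2$ blocks. Strictly speaking the $4\times4$ determinant does \emph{not} factor: $\Upsilon_2=\tfrac{\ii}{2}\diag(\zeta_0,\zeta_0^{-1},\zeta_0,\zeta_0^{-1})$, the matrix $U$ is generically full, and what actually happens is that with the dominant-balance ansatz one of the two $2\times2$ minors (in rows/columns $\{1,3\}$ or $\{2,4\}$) becomes $I_2+o(1)$ while the other carries the order-one equation. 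You say this almost correctly in the second half of the same sentence, so the issue is one of phrasing, not of substance.

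The one genuine slip is in the $-$ channel bookkeeping. For $z\to\xi_0$ the divergent entries of $\Upsilon_2$ are the $\zeta_0^{-1}$ ones, in positions $2,4$; the dominant balance therefore forces $\tfrac{\ii}{2}\eps\zeta_0^{-1}$ (not $-\tfrac{\ii}{2}\eps\zeta_0$) to be a root of $\det(u\,U^-+I_2)=0$, i.e.\ the correct relation is $u^-=\tfrac{\ii}{2}\eps\zeta_0^{-1}$, equivalently $\zeta_0=\tfrac{\ii\eps}{2u^-}$. The sign constraint happens to come out the same ($\Re u^-<0\iff\Im\zeta_0<0$), which is why the error is invisible if one only checks admissibility. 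But if you now carry out the ``mirror computation'' that you leave to the reader, you obtain
\begin{equation}
z_-(\eps)=\xi_0-\frac{\xi_0}{2(u^-)^2}\,\eps^2+o(\eps^2),
\end{equation}
with $(u^-)^{-2}$ rather than $(u^-)^{2}$; this is the version that reduces correctly to the one-dimensional formula~\eqref{z-.exp} upon $U^-=(U_{22})$, $u^-=-1/U_{22}$. So your relation $u^-=-\tfrac{\ii}{2}\eps\zeta_0$ and the statement's \eqref{z-.exp.ac} as printed (together with the paper's initial guess $\zeta^0_-=\tfrac{\ii}{2}\eps u^-$) appear to carry a reciprocal typo; worth re-deriving carefully rather than taking the printed expansion on trust. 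Finally, the simplicity restriction you impose on $u^\pm$ is unnecessary: Rouch\'e counts zeros with multiplicity, and $F(\alpha)=c\alpha^k+o(1)$ uniformly on a small circle suffices for existence, exactly as in the proof of Theorem~\ref{thm:1D.weak}.
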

\begin{proof}
The strategy and individual steps are the same as in the one-dimensional case (Theorem~\ref{thm:1D.weak}) thus we indicate only the differences. Employing a decomposition as in \eqref{Q.decom} and the fact that the kernel of $L$ is separated, we convert the problem $-1 \in \spp(Q(z))$ into the algebraic equation 
\begin{equation}
\det \left(I_4 + \eps \Upsilon_2 (U+U_1)  \right)=0,
\end{equation}
with $U$ as in \eqref{U0.def.ac} and $U_1=\BigO\left(\frac{\eps\|M\|}{1-\eps \|M\|}\right)$. 
As an initial guess we consider 
\begin{equation}
\zeta^0_+= - \frac{2\ii}{\eps} u^+, \qquad 
\zeta^0_-= \frac{\ii}{2} \eps u^-,
\end{equation}
for which we obtain $\det \left( I_4 + \eps \Upsilon_2 U \right)= \BigO(\eps^2)$ as $\eps \to 0+$; the latter can be verified by the Laplace expansion of the determinant. The rest of the proof follows the lines of the one of Theorem~\ref{thm:1D.weak}, employing the estimates on $M$ from Lemma~\ref{lem:M12.est} and formulas \eqref{Ups.def.ac}.
\end{proof}

\begin{example}\label{Ex:V.diag.ac}
In particularly simple case where $V = \diag(v_1,v_2,v_3,v_4)$ with $v_j=v_j(x_2)$, $j=1,\dots,4$, and with $a = \Theta^2/(8 \sin^2(\Theta/4))$, straightforward calculations reveal that 
\begin{equation}
u^+=u^-=-\frac{1}{\int_{\R} \Tr(V(x_2)) \; \dd x_2}.
\end{equation}
Thus, depending on the sign of $\Re u^\pm$, we obtain eigenvalues $z_\pm$ obeying \eqref{z+.exp.ac} or \eqref{z-.exp.ac}.
\end{example}

{\footnotesize
	\bibliographystyle{acm}
	\bibliography{references}
}

\end{document}